\title{Remarks on $\tau$-tilted versions of the second Brauer-Thrall Conjecture}
\author{Calvin Pfeifer} 
\address{ 
    Calvin Pfeifer: 
    Center for Quantum Mathematics, 
    Department of Mathematics and Computer Science, 
    University of Southern Denmark, 
    Campusvej 55, DK-5230 Odense M, Denmark 
} 
\email{capf@sdu.dk} 
\date{\today}
\begin{document}

\begin{abstract}
    In this short note, we state a stable and a $\tau$-reduced version of the second Brauer-Thrall Conjecture.
    The former is a slight strengthening of a brick version of the second Brauer-Thrall Conjecture
    raised by Mousavand and Schroll-Treffinger-Valdivieso.
    The latter is stated in terms of Geiss-Leclerc-Schröer's generically $\tau$-reduced components
    and provides a geometric interpretation of a question of Demonet.
    It follows that the stable second Brauer-Thrall Conjecture 
    implies our $\tau$-reduced second Brauer-Thrall Conjecture.
    Finally, we prove the reversed implication for the class of $E$-tame algebras
    recently introduced by Asai-Iyama.
\end{abstract}

\maketitle

\section{Introduction}

In 2014, an important new branch of Representation Theory emerged with
Adachi-Iyama-Reiten's $\tau$-tilting theory \cite{AIR14}.
It generalizes the mutation theory of quivers with potentials \cite{DWZ08}
to arbitrary finite-dimensional algebras.
This theory may be seen from two ``Koszul dual'' perspectives:
There are $\tau$-rigid modules central to $\tau$-tilting theory \cite{AIR14} on the one hand
and on the other there are bricks \cite{DIJ19} and more specifically stable modules \cite{BST19}.
In particular, there is now a remarkable theory of $\tau$-tilting finite algebras initiated in
\cite{DIJ19}.
This motivates to go beyond the finite case 
just like Brauer and Thrall went beyond representation finite algebras
a century ago.
The famous Brauer-Thrall Conjectures were an important driving force of 20th century Representation Theory 
% of finite-dimensional algebras
and stimulated the development of many key techniques
to construct and classify modules over finite-dimensional algebras.
These conjectures were first published in Jan's Thesis \cite[Chapter I.1]{J54}
where he refers to unpublished notes of Brauer and Thrall
who intendet their conjectures as exercises for graduate students, 
see \cite{Rin80} for a thorough survey with historical remarks.
The first Brauer-Thrall Conjecture is now a well established theorem 
due to Roiter
and states

\begin{bti}[Theorem due to \cite{Roi68}]
    Let $A$ be a representation infinite algebra.
    For every $d\geq 0$ 
    exists an indecomposable $V\in\mod(A)$
    with $\dim(V) \geq d$.
\end{bti}

Recently, a $\tau$-tilted version of the first Brauer-Thrall Conjecture 
in terms of bricks was proved
by Schroll-Treffinger \cite{ST22} (see also \cite{MP23}).
This motivates to seek for $\tau$-tilted analogs of 
the much harder second Brauer-Thrall Conjecture
which classically states

\begin{btii}
    Let $A$ be a representation infinite algebra.
    For every $d_0\geq 0$ 
    exists $d\geq d_0$
    and infinitely many
    pairwise non-isomorphic 
    indecomposable $V\in\mod(A)$
    with $\dim(V) = d$.
\end{btii}

It is still open over general fields.
The first complete proof valid over algebraically closed fields
was achieved by Bautista \cite{B85},
see also \cite{Bon16} for a proof with remarks on its history and further references.
The second conjecture is a bit too strong to allow for direct $\tau$-tilted versions 
as the Kronecker algebra demonstrates.
Fortunately, Smal{\o} \cite{S80} provided an induction step which reduces Brauer-Thrall II 
to the a priori weaker

\begin{btii'}
    Let $A$ be a representation infinite algebra.
    There exist $d\geq 0$
    and infinitely many
    pairwise non-isomorphic 
    indecomposable $V\in\mod(A)$
    with $\dim(V) = d$.
\end{btii'}

In \cite{M22} and \cite{STV21} a $\tau$-tilted version of Brauer-Thrall II' in terms of bricks was proposed.
We decided to strengthen their conjecture slightly in terms of stable modules,
this is our Conjecture \ref{conj:stab_bt_ii} and states

\begin{stablebtii}
    Let $A$ be a $\tau$-tilting infinite algebra.
    Then there exists a dimension vector $\bd\in\K_0(A)^+$
    and a weight $\theta\in\K_0(A)^*$
    such that the dimension of the moduli space of $\theta$-stable $A$-modules
    with dimension vector $\bd$
    is strictly positive.
\end{stablebtii}

This conjecture was recently confirmed 
for the class of special biserial algebras in \cite{STV21} (see Example \ref{ex:special_biserial_algebras})
and we confirm it for the class of GLS algebras from \cite{GLS17i} in an upcoming preprint \cite{Pfe23gls} (see Example \ref{ex:gls_algebras}).
On the other hand, elementary and well known geometric considerations 
reveal that there can be only finitely isomorphism classes of $\tau$-rigid modules of a fixed dimension.
Thus, to state a $\tau$-tilted version of Brauer-Thrall II we invoke Geiß-Leclerc-Schröers $\tau$-reduced components \cite{GLS12}
as natural generalizations of $\tau$-rigid modules and arrive at our Conjecture \ref{conj:tau_bt_ii} which states

\begin{taubtii}
    Let $A$ be a $\tau$-tilting infinite algebra.
    Then there exists a dimension vector $\bd\in\K_0(A)$
    and a generically $\tau$-reduced and irreducible component 
    of $\Rep(A,\bd)$
    with generically at least one parameter.
\end{taubtii}

We use Plamondon's classification of $\tau$-reduced components \cite{Pla13}
to show that our $\tau$-reduced Brauer-Thrall II' Conjecture 
is in fact equivalent to a conjectural characterization of $\tau$-tilting finite algebras
in terms of their $\g$-vector fans due to Demonet.

\begin{demonet}\cite[Question 3.49]{D17}
    A finite-dimensional algebra $A$ is $\tau$-tilting finite
    if and only if its $\g$-vector fan is rationally complete.
\end{demonet}

With Jasso's $\tau$-tilting reduction \cite{J15}
and Brüstle-Smith-Treffingers semistability of $\tau$-perpendicular subcategories \cite{BST19},
we observe that the stable Brauer-Thrall II' Conjecture implies Demonet's Conjecture.
For the recently introduced class of $\E$-tame algebras 
from \cite{AI21} and \cite{DF15} (see Definition \ref{def:e_tame}),
we are able to prove that all three conjectures are equivalent.
This is the Main Theorem of our short note:

\begin{maintheorem}\label{thm:main}
    Let $A$ be a finite-dimensional algebra.
    Consider the following statements:
    \begin{enumerate}[label = (\roman*)]
        \item \label{enum:demonet_conjecture}
        $A$ satisfies Demonet's Conjecture.
        \item \label{enum:demonet_tau_reduced_bt_ii}
        $A$ satisfies the $\tau$-reduced Brauer-Thrall II' Conjecture.
        \item \label{enum:demonet_stable_bt_ii}
        $A$ satisfies the stable Brauer-Thrall II' Conjecture.
    \end{enumerate}
    Then the implications
    $\ref{enum:demonet_conjecture}
    \Leftrightarrow \ref{enum:demonet_tau_reduced_bt_ii}
    \Leftarrow \ref{enum:demonet_stable_bt_ii}$
    hold.
    If $A$ is $E$-tame, then 
    $\ref{enum:demonet_tau_reduced_bt_ii} \Rightarrow \ref{enum:demonet_stable_bt_ii}$
    holds as well.
\end{maintheorem}
\begin{proof}
    The equivalence $\ref{enum:demonet_conjecture}
    \Leftrightarrow \ref{enum:demonet_tau_reduced_bt_ii}$
    is Proposition \ref{prop:demonet_tau_btii}.
    The implication
    $\ref{enum:demonet_stable_bt_ii}\Rightarrow \ref{enum:demonet_conjecture}$
    is Proposition \ref{prop:demonet_stable_btii}.
    It remains to prove
    $\ref{enum:demonet_tau_reduced_bt_ii}\Rightarrow \ref{enum:demonet_conjecture}$
    for $E$-tame algebras.
    This is done in the final Section \ref{sec:main_proof}.
\end{proof}

\section{Conventions}

\subsection*{Topology}
For a topological space $\cX$ write $\dim(\cX)$ for its Krull dimension
and given $x\in \cX$ write $\dim_x(\cX)$ for the local dimension of $\cX$ at $x$.
For an irreducible topological space $\cZ$ and a property $\sfP$ of points of $\cZ$, 
we say
\begin{align*}
    \text{
    \emph{``$\cZ$ satisfies $\sfP$ generically''}
    or
    \emph{``the generic elements of $\cZ$ satisfy $\sfP$''}
    etc.
    }
\end{align*}
if there exists a non-empty open hence dense subset $\cU\subseteq \cZ$
such that every $z\in\cU$ satisfies $\sfP$.
In particular, for a constructible map $f\colon \cZ \to X$ to a set $X$
we write $f(\cZ) = x$ for some $x\in X$
if $f(z) = x$ for generic $z\in\cZ$.

\subsection*{Representations}
Throughout, we fix an algebraically closed field $K$.
All our algebras $A = KQ/I$ are finite-dimensional associative 
and given by a quiver $Q$ with admissible ideal $I\subseteq KQ$.
All our modules are finite-dimensional left $A$-modules
and we denote the category of modules by $\mod(A)$
with its Auslander-Reiten translation $\tau_A$.
We freely identify $A$-modules with $K$-linear representations of $Q$ satisfying the relations in $I$.
Let $\K_0(A)$ denote the Grothendieck group of $\mod(A)$
with standard basis given by the classes of simple $A$-modules $S_i$ indexed by vertices $i\in Q_0$.
More generally, define for any commutative ring $R$
\begin{align*}
    \K_0(A)_R := \K_0(A)\otimes_{\bZ} R, && \K_0(A)_R^* := \Hom_\bZ(\K_0(A),R).
\end{align*}
Further, let $\K_0(A)^+\subseteq \K_0(A)$ denote the submonoid of classes of $A$-modules.
Given $\bd\in\K_0(A)^+$ with $\bd = (d_i)_{i\in Q_0}$, 
write $\Rep(A,\bd)$ for the affine variety of representations of $A$ 
with dimension vector $\bd$ and 
\[
    \GL(K,\bd) := \prod_{i\in Q_0} \GL(K,d_i)
\] 
for the structure group acting on $\Rep(A,\bd)$ via conjugation.
We write $\Irr(A,\bd)$ for the set of irreducible components of $\Rep(A,\bd)$ and set
\begin{align*}
    \Irr(A) := \bigsqcup_{\bd\in\K_0(A)^+} \Irr(A,\bd).
\end{align*}
Denote the $\GL(K,\bd)$-orbit of a representation $V\in\Rep(A,\bd)$ by $\cO(V)$.
The \emph{generic number of parameters} of $\cZ\in\Irr(A)$ is
\begin{align*}
    c_A(\cZ) := \min \{\dim \cZ - \dim \cO(V) \mid V\in\cZ\}.
\end{align*}
The following functions are well-known to be upper semicontinuous
for all $\bd,\bd'\in\K_0(A)^+$:
\begin{align*}
    \dimhom_A(-,?)&\colon \Rep(A,\bd) \times \Rep(A,\bd') \to \bZ, 
    ~~~~ ~~~~ ~~~~ ~~~~ 
    (V,W) \mapsto \dim_K \Hom_A(V,W), \\
    \dimext^1_A(-,?)&\colon \Rep(A,\bd) \times \Rep(A,\bd') \to \bZ, 
    ~~~~ ~~~~ ~~~~ ~~~~ 
    (V,W) \mapsto \dim_K \Ext^1_A(V,W), \\
    \dimhom^\tau_A(-,?)&\colon \Rep(A,\bd) \times \Rep(A,\bd') \to \bZ, 
    ~~~~ ~~~~ ~~~~ ~~~~ 
    (V,W) \mapsto \dim_K \Hom_A(W,\tau_A(V)),
\end{align*}
and so are their diagonal values
\begin{align*}
    \dimend_A(-)&\colon \Rep(A,\bd) \to \bZ, 
    ~~~~ ~~~~ ~~~~ ~~~~ 
    V \mapsto \hom_A(V,V), \\
    \dimext^1_A(-)&\colon \Rep(A,\bd) \to \bZ, 
    ~~~~ ~~~~ ~~~~ ~~~~ 
    V \mapsto \dimext^1_A(V,V), \\
    \dimhom^\tau_A(-)&\colon \Rep(A,\bd) \to \bZ, 
    ~~~~ ~~~~ ~~~~ ~~~~ 
    V \mapsto \dimhom^\tau_A(V,V);
\end{align*}
see \cite{CB93} and \cite{GLFS23} for the proofs.
Therefore, their generic values $\dimhom_A(\cZ,\cZ')$, $\dimend_A(\cZ)$, etc.
coincide with their minimal values on $\cZ\times\cZ'$ respectively $\cZ$ for $\cZ,\cZ'\in\Irr(A)$.

\subsection*{Presentations}
The full exact subcategory of $\mod(A)$ of projective $A$-modules is denoted $\proj(A)$.
Let $\K^{b}(A)$ be the bounded homotopy category of complexes of projective $A$-modules 
with $\Sigma$ its shift functor.
Let $\K_0^{\proj}(A)$ be the Grothendieck group of $\proj(A)$ 
with standard basis given by the classes of indecomposable projectives $P_i$ indexed by $i\in Q_0$.
Let $\K_0^{\proj}(A)^+$ be the submonoid of $\K_0^{\proj}(A)$ consisting of classes of projective $A$-modules.
The \emph{Euler pairing} is defined for $P\in\proj(A)$ and $V\in\mod(A)$ as
\begin{align*}
    \euler{-,?}_A\colon \K_0^{\proj}(A) \times \K_0(A) \to \bZ, ~~~~
    \euler{P,V}_A := \dim_K \Hom_A(P,V).
\end{align*}
Since $A$ is by our standing assumptions a split $K$-algebra, this induces an isomorphism 
\begin{align*} \label{euler_embedding}
(-)^\vee\colon \K_0^{\proj}(A) \xrightarrow{\sim} \K_0(A)^*.
\end{align*}
Given $\bmgamma\in\K_0^{\proj}(A)^+$ written as $\bmgamma = (\gamma_i)_{i\in Q_0}$,
there is up to isomorphism a unique 
$P_\bmgamma\in\proj(A)$ with $[P_\bmgamma] = \bmgamma$ in $\K_0^{\proj}(A)$
and we consider the smooth and irreducible open subvariety 
\[
    \Proj(A,\bmgamma) := \cO(P_\bmgamma) \subseteq \Rep(A,\dimv(P_\bmgamma)).
\]
Given any $\bmgamma\in\K_0^{\proj}(A)$, set
\begin{align*}
    \Proj(A,\bmgamma) := \Proj(A,\bmgamma_1) \times \Proj(A,\bmgamma_0).
\end{align*}
where $\bmgamma = \bmgamma_0 - \bmgamma_1$ for unique $\bmgamma_0,\bmgamma_1\in\K_0^{\proj}(A)^+$
with $\bmgamma_{0} \cdot \bmgamma_{1} = 0$.
Plamondon's \emph{variety of reduced presentations} \cite[Section 2.4]{Pla13} is the variety of triples
\begin{align*}
    \Pres(A,\bmgamma) := \{\vec{P}=(p,P_1,P_0) \mid \text{$(P_1,P_0)\in\Proj(A,\bmgamma)$ and $p\in\Hom_A(P_1,P_0)$}\}
\end{align*}
upon which the group 
$\GL(K,\bmgamma) := \GL(K,\dimv(P_{\bmgamma_1})) \times \GL(K,\dimv(P_{{\bmgamma_0}}))$ 
acts via conjugation.
This is easily seen to be a $\GL(K,\bmgamma)$-equivariant vector bundle 
over the base $\Proj(A,\bmgamma)$.
In particular, $\Pres(A,\bmgamma)$ is a smooth and irreducible variety.
The following functions are well-known to be upper semicontinuous
for every $\bmgamma,\bmgamma'\in\K_0^{\proj}(A)$:
\begin{align*}
    h_A(-,?)&\colon \Pres(A,\bmgamma) \times \Pres(A,\bmgamma') \to \bZ, &&&
    (\vec{P},\vec{Q}) &\mapsto \dim_K \Hom_{K^{b}(A)}(\vec{P},\vec{Q}); \\
    e_A(-,?)&\colon \Pres(A,\bmgamma) \times \Pres(A,\bmgamma') \to \bZ, &&&
    (\vec{P},\vec{Q}) &\mapsto \dim_K \Hom_{K^{b}(A)}(\vec{P}, \Sigma(\vec{Q})); \\
    h_A(-)&\colon \Pres(A,\bmgamma) \to \bZ, &&&
    \vec{P} &\mapsto h_A(\vec{P},\vec{P}); \\
    e_A(-)&\colon \Pres(A,\bmgamma) \to \bZ, &&&
    \vec{P} &\mapsto e_A(\vec{P},\vec{P}),
\end{align*}
where we naturally consider $\vec{P} = (P_1 \xrightarrow{p} P_0)\in\K^{b}(A)$ 
with $P_0$ in degree $0$ and $P_1$ in degree $-1$.
Therefore, their generic values $h_A(\bmgamma,\bmgamma')$, $e_A(\bmgamma,\bmgamma')$, $h_A(\bmgamma), e_A(\bmgamma)$
coincide with their minimal values on 
$\Pres(A,\bmgamma)\times\Pres(A,\bmgamma')$ respectively $\Pres(A,\bmgamma)$ for $\bmgamma,\bmgamma'\in\K_0^{\proj}(A)$.

\section{Demonet's Conjecture and $\g$-vectors}

The protagonists of Adachi-Iyama-Reiten's $\tau$-tilting theory \cite{AIR14} are 
\emph{$\tau$-rigid modules}
that is modules $V\in\mod(A)$ with $\Hom_A(V,\tau_A(V)) =0$.
More generally, a pair $(V,P)\in\mod(A)\times \proj(A)$ is \emph{$\tau$-rigid}
if $V$ is $\tau$-rigid and $\Hom_A(P,V) =0$.
The \emph{$\g$-vector} of a module $V\in\mod(A)$ 
with minimal projective presentation $P_1 \to P_0 \to V \to 0$
and more generally of a pair $(V,P)$ with $P\in\proj(A)$
is defined as the class
\begin{align*}
    \g(V) := [P_0] - [P_1] \in \K_0^{\proj}(A), && \g(V, P) := \g(V) - \g(P) \in \K_0^{\proj}(A).
\end{align*}
The $\g$-vectors of indecomposable summands of $\tau$-rigid pairs $(V,P)$
span convex cones
\begin{align*}
    \sfC(V,P) := 
    \left\{\sum_{i = 1}^{m} \alpha_i \g(V_i) - \sum_{j = 1}^{m'} \beta_j \g(P_j) ~ \middle\vert ~ 
    \text{$\forall i,j:~ \alpha_{i},\beta_j \in\bR_{\geq 0}$}\right\} 
    \subseteq \K_0^{\proj}(A)_{\bR}
\end{align*}
where $V = V_1\oplus \dots\oplus V_m$ and $P = P_1\oplus \dots\oplus P_{m'}$
are their direct sum decompositions into indecomposable summands.
The \emph{$\g$-vector fan} $\fan(A)\subseteq \K_0(A)^{\proj}_\bR$ is defined as the union of all cones of $\tau$-rigid pairs.
This is a rational non-singular polyhedral fan in $\K_0^{\proj}(A)_\bR$ introduced and studied in \cite{DIJ19}.

\bigskip

It was realized by Brüstle-Smith-Treffinger \cite{BST19} that $\tau$-tilting theory
is closely related to King's theory of semistability \cite{K94}.
Given $\theta\in\K_0(A)^*_\bR$, a module $V\in\mod(A)$ is \emph{$\theta$-semistable}
if $\theta(V) = 0$ and for every proper non-zero submodule $U\subseteq V$ is $\theta(U) \leq 0$;
and $V$ is \emph{$\theta$-stable} if the inequality is always strict.
We say that a module $V\in\mod(A)$ is \emph{stable} 
if there exists a weight $\theta\in\K_0(A)^*_\bR$ 
such that $V$ is $\theta$-stable.
A bridge between stability and $\tau$-tilting theory is provided by \emph{Auslander-Reiten's $\g$-vector formula}: 
\begin{align} \label{ar_g_vector}
    \euler{\g(V,P),\dimv(X)}_A = \dimhom_A(V,X) - \dimhom_A(X,\tau_A(V)) - \dimhom_A(P,X)
\end{align}
for all $V,X\in\mod(A)$ and $P\in\proj(A)$ \cite[Theorem 1.4]{AR85}.
Indeed, this formula allows us to identify parts of King's semistable subcategories
\begin{align*}
    \cW(\theta) := \{V\in\mod(A) \mid \text{$V$ is $\theta$-semistable}\}, ~~~~
    \text{for $\theta\in\K_0(A)^*$}
\end{align*}
as Jasso's \emph{$\tau$-perpendicular subcategories} 
\cite[Definition 3.3]{J15}
\begin{align*}
    \cW(V,P) := V^\perp \cap {^\perp(\tau_A(V))} \cap P^\perp, ~~~~
    \text{for $(V,P)\in\mod(A)\times\proj(A)$}.
\end{align*}
Here we write $V^\perp := \{X\in\mod(A) \mid \Hom_A(V,X) = 0\}$ and 
${^\perp V} := \{X\in\mod(A) \mid \Hom_A(X,V) = 0\}$ for $V\in\mod(A)$.

\begin{lemma}\label{lem:tau_perp_serre}
    Let $\theta\in\K_0(A)^*$ and $(V,P)\in\mod(A)\times\proj(A)$
    with $\g(V,P)^\vee = \theta$. Then
    \begin{align*}
        \cW(V,P) \subseteq \cW(\theta)
    \end{align*}
    is a Serre subcategory of $\cW(\theta)$.
\end{lemma}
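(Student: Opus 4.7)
The central tool will be the Auslander-Reiten $\g$-vector formula (\ref{ar_g_vector}): for any $X \in \mod(A)$,
\[
\theta(X) = \euler{\g(V,P),\dimv(X)}_A = \dimhom_A(V,X) - \dimhom_A(X,\tau_A(V)) - \dimhom_A(P,X).
\]
This identity converts the defining vanishings of $\cW(V,P)$ into the numerical conditions defining $\cW(\theta)$, and will be applied repeatedly.

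\textbf{Step 1 (Inclusion $\cW(V,P)\subseteq \cW(\theta)$).} Given $X\in\cW(V,P)$, the three summands on the right of the formula vanish, so $\theta(X)=0$. For any submodule $U\subseteq X$, the left exactness of $\Hom_A(V,-)$ and of $\Hom_A(P,-)$ (the latter in fact being exact, since $P$ is projective) forces $\Hom_A(V,U)=0$ and $\Hom_A(P,U)=0$. Applying the formula to $U$ then gives
\[
\theta(U) = -\dimhom_A(U,\tau_A(V)) \leq 0,
\]
so $X$ is $\theta$-semistable.

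\textbf{Step 2 (Closure under sub- and quotient objects in $\cW(\theta)$).} Let $0\to X'\to X\to X''\to 0$ be exact with all three terms in $\cW(\theta)$ and $X\in\cW(V,P)$. Left exactness of $\Hom_A(V,-)$ and of $\Hom_A(P,-)$ gives $\Hom_A(V,X')=\Hom_A(P,X')=0$; left exactness of $\Hom_A(-,\tau_A(V))$ and exactness of $\Hom_A(P,-)$ give $\Hom_A(X'',\tau_A(V))=\Hom_A(P,X'')=0$. Now apply the $\g$-vector formula to $X'$ and $X''$: since $\theta(X')=\theta(X'')=0$ and all known terms vanish, the single remaining nonnegative term in each identity must vanish as well. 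This yields $\Hom_A(X',\tau_A(V))=0$ and $\Hom_A(V,X'')=0$, so both $X',X''$ lie in $\cW(V,P)$.

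\textbf{Step 3 (Closure under extensions in $\cW(\theta)$).} Now assume $X',X''\in\cW(V,P)$. The long exact sequences obtained by applying $\Hom_A(V,-)$, $\Hom_A(-,\tau_A(V))$, and $\Hom_A(P,-)$ to $0\to X'\to X\to X''\to 0$ immediately sandwich the three relevant groups for $X$ between zeros, giving $X\in\cW(V,P)$. This concludes that $\cW(V,P)$ is a Serre subcategory of $\cW(\theta)$.

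The only step that requires any thought is Step 2: a priori one might fear that a submodule of $X\in\cW(V,P)$ inherits $\Hom_A(V,-)=0$ and $\Hom_A(P,-)=0$ but not the Hom into $\tau_A(V)$. The assumption that $X'$ itself lies in $\cW(\theta)$ (which is what "Serre subcategory of $\cW(\theta)$" allows us to use, rather than Serre in $\mod(A)$) is precisely what rescues this via the AR formula; this trade-off is the conceptual content of the lemma.
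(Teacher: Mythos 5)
Your proof is correct and follows essentially the same route as the paper's: apply the Auslander--Reiten $\g$-vector formula to submodules and quotients, use the perpendicularity conditions to kill two of the three terms, and let the vanishing of $\theta$ on a semistable sub/quotient force the third term to vanish. You merely spell out in full the steps the paper compresses into ``similarly'' (the quotient case and closure under extensions).
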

\begin{proof}
    Let $W\in\cW(V,P)$ and $U\subseteq W$, then by (\ref{ar_g_vector})
    \begin{align*}
        \theta(U) = \euler{\g(V,P),\dimv(U)}_A = \dimhom_A(V,U) - \dimhom_A(U,\tau_A(V)) - \dimhom_A(P,U) \leq 0
    \end{align*}
    because $\hom_A(V,U) = \hom_A(P,U)= 0$ 
    for $W\in V^\perp\cap P^{\perp}$ and $U$ is a submodule of $W$.
    Also, $\theta(W) = 0$ hence $W\in\cW(\theta)$.
    If moreover $U\in\cW(\theta)$,
    then $\theta(U) = 0$ 
    hence we must have $\hom_A(U,\tau_A(V)) = 0$.
    Therefore $U\in\cW(V,P)$.
    Similarly, every $\theta$-semistable factor of $W$ lies in $\cW(V,P)$.
    Finally, it is clear that $\cW(V,P) := V^\perp \cap {^\perp(\tau_A V)} \cap P^\perp$ is closed under extensions.
\end{proof}

This lemma can be found in \cite[Lemma 3.8]{AI21} in terms of presentations;
since the proof is straight forward and our proof of the Main Theorem \ref{thm:main} heavily relies on it, 
we include it here for convenience.
The proof strategy is the same as in \cite[Section 3.3]{BST19} 
where they prove equality in Lemma \ref{lem:tau_perp_serre}
if $\theta$ lies in the relative interior of $\sfC(V,P)^\vee$ for a $\tau$-rigid pair $(V,P)$. 

The outlined correspondences are particularly appealing in the $\tau$-tilting finite case:

\begin{theorem}\cite{DIJ19}\cite{BST19}\cite{Asa21} \label{thm:tau_finite}
    For a finite-dimensional algebra $A$, the following statements are equivalent
    \begin{enumerate}[label = (\roman*)]
        \item There are only finitely many isomorphism classes of indecomposable $\tau$-rigid $A$-modules;
        \item There are only finitely many isomorphism classes of stable $A$-modules;
        \item The $\g$-vector fan of $A$ is complete i.e. $\fan(A) = \K_0^{\proj}(A)_\bR$.
    \end{enumerate}
\end{theorem}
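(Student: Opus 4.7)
The plan is to split the three equivalences into (i) $\Leftrightarrow$ (iii), essentially following \cite{DIJ19}, and then link (ii) via \cite{BST19} and \cite{Asa21}.

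For (i) $\Rightarrow$ (iii) I would invoke the mutation theory of \cite{AIR14}. The top-dimensional cones of the $\g$-vector fan are precisely $\sfC(V,P)$ for support $\tau$-tilting pairs $(V,P)$, which under (i) form a finite set. By the exchange theorem, crossing a facet of such a chamber corresponds to the mutation of a single indecomposable summand, and the mutation graph starting from the projective generator $A$ is connected. Combined with local finiteness, the union of chambers can have no internal boundary and must therefore cover $\K_0^{\proj}(A)_{\bR}$. For (iii) $\Rightarrow$ (i), I would exploit simpliciality: each chamber has exactly $|Q_0|$ rays, each a $\g$-vector of an indecomposable $\tau$-rigid pair. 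Realizing $\fan(A)$ as the normal fan of a convex polytope (the ``$\g$-polytope'' of \cite{DIJ19}) gives a polytopal, hence finite, combinatorial structure, bounding the number of rays and so of indecomposable $\tau$-rigid pairs.

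For (i) $\Leftrightarrow$ (ii) I would first observe that every stable module is a brick: if $f \colon V \to V$ is a nonzero endomorphism of a $\theta$-stable module $V$, then $\theta(\operatorname{Im}(f)) = 0$ forces $f$ to be an isomorphism. Hence finitely many bricks implies finitely many stable modules, and the fundamental identification of $\tau$-tilting finiteness with brick finiteness from \cite{DIJ19} yields (i) $\Rightarrow$ (ii). The converse relies on the non-trivial direction from \cite{Asa21} (with input from \cite{BST19}): to each brick $B$ one associates an explicit weight $\theta \in \K_0(A)^*_{\bR}$ in the relative interior of an appropriate face of $\fan(A)$ for which $B$ is $\theta$-stable, so brick-infinite forces stable-infinite. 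The key input here is essentially Lemma \ref{lem:tau_perp_serre}, which identifies the simples of a $\tau$-perpendicular subcategory with the $\theta$-stable modules for a weight $\theta$ in the corresponding wall.

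The main obstacle is the step (iii) $\Rightarrow$ (i): ruling out an a priori infinite but complete rational fan. The key technical input is the polytopality of $\fan(A)$ from \cite{DIJ19}, which prevents accumulation of rays. Without this, one could imagine a complete rational simplicial fan with infinitely many chambers converging in some direction; it is precisely the additional representation-theoretic structure of the $\g$-vector fan that excludes such pathologies. The implication (ii) $\Rightarrow$ (i) is the next most delicate point, as it requires producing a separating weight for each brick—something not visible from the definition of a brick alone and where \cite{Asa21}'s analysis of semibricks and wide subcategories is the crucial ingredient.
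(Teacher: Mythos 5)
The paper does not actually prove Theorem \ref{thm:tau_finite}: it is quoted as background with the references \cite{DIJ19}, \cite{BST19}, \cite{Asa21}, so there is no internal proof to compare against. Judged on its own terms, your overall architecture — (i)$\Leftrightarrow$(iii) from the fan/mutation theory and (i)$\Leftrightarrow$(ii) via bricks and semistability — is the correct organization of the cited results, and your treatment of (i)$\Rightarrow$(iii) (finiteness plus connectedness of mutation plus the ``no free facet'' argument) and of (i)$\Rightarrow$(ii) (stables are bricks, and brick-finiteness is equivalent to $\tau$-tilting finiteness) is sound.

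Two steps have genuine gaps. First, (iii)$\Rightarrow$(i) via a ``$\g$-polytope'' is circular: realizing $\fan(A)$ as the normal fan of a polytope presupposes that the fan is finite and complete, i.e.\ it is a \emph{consequence} of $\tau$-tilting finiteness rather than a tool for proving it (no such polytope is constructed in \cite{DIJ19}, and completeness of the $\g$-fan does not in general force polytopality). The published argument that completeness forces finiteness is representation-theoretic (it is part of Asai's analysis of the wall--chamber structure in \cite{Asa21}), not a purely combinatorial consequence of simpliciality; indeed the whole difficulty of Demonet's Conjecture \ref{conj:demonet} is that one cannot pass from a completeness-type hypothesis to finiteness by fan combinatorics alone. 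Second, in (ii)$\Rightarrow$(i) you assert that ``to each brick $B$ one associates a weight $\theta$ for which $B$ is $\theta$-stable''; that every brick is stable is not known in general and is not what the cited results give. What is true, and what suffices, is that the injection of \cite{DIJ19} from indecomposable $\tau$-rigid modules to bricks lands in the \emph{left finite} bricks, and these arise as simple objects of $\tau$-perpendicular subcategories $\cW(V,P)$; by \cite{BST19} (the equality case of Lemma \ref{lem:tau_perp_serre}, exactly as used in Proposition \ref{prop:demonet_stable_btii}) one has $\cW(V,P)=\cW(\theta)$ for $\theta$ in the relative interior of $\sfC(V,P)^\vee$, so these particular bricks are $\theta$-stable. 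Hence $\tau$-rigid-infinite implies stable-infinite, which is the contrapositive you need; the blanket claim about arbitrary bricks should be removed.
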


An algebra $A$ is said to be \emph{$\tau$-tilting finite} 
if $A$ satisfies any of the equivalent properties in Theorem \ref{thm:tau_finite}.
Of course, an algebra $A$ is \emph{$\tau$-tilting infinite} if $A$ is not $\tau$-tilting finite.
Demonet asks in \cite[Question 3.49]{D17} whether it suffices for $A$ to be $\tau$-tilting finite
that $\fan(A)$ is ``rationally complete''
in the following sense:

\begin{conjecture}[Demonet's Conjecture] \label{conj:demonet}
    A finite-dimensional algebra $A$
    is $\tau$-tilting finite
    if and only if 
    $\K_0^{\proj}(A)_\bZ \subseteq \fan(A)$.
\end{conjecture}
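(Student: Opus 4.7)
The plan is to split the biconditional into its two implications. The easy direction, that $\tau$-tilting finiteness of $A$ implies $\K_0^{\proj}(A)_\bZ \subseteq \fan(A)$, is immediate from Theorem~\ref{thm:tau_finite}(iii): $\tau$-tilting finiteness forces $\fan(A) = \K_0^{\proj}(A)_\bR$, which trivially contains every integer point. All the content sits in the converse, for which I would use Plamondon's classification of generically $\tau$-reduced components from \cite{Pla13} as the organising tool.

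Plamondon shows that generically $\tau$-reduced components, enlarged by coefficient projective summands, are parametrised by $\K_0^{\proj}(A)_\bZ$, so each integer point $\bmgamma$ corresponds to a unique such component $\cZ_{\bmgamma}$. Moreover, the generic number of parameters $c_A(\cZ_{\bmgamma})$ vanishes exactly when $\bmgamma$ is the $\g$-vector of a $\tau$-rigid pair, that is, precisely when $\bmgamma \in \fan(A)$. Thus the hypothesis $\K_0^{\proj}(A)_\bZ \subseteq \fan(A)$ is equivalent to asking that every generically $\tau$-reduced component has zero parameters generically, and the task reduces to proving that this vanishing forces $\tau$-tilting finiteness of $A$.

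Taking the contrapositive, I need to show that if $A$ is $\tau$-tilting infinite, then some generically $\tau$-reduced component has strictly positive $c_A$. This is exactly the $\tau$-reduced Brauer--Thrall II' Conjecture as formulated in the introduction, so at this point the plan is to pursue it directly: starting from an infinite family of indecomposable $\tau$-rigid modules, one hopes either to produce a one-parameter family of bricks of fixed dimension vector that directly yields a positive-parameter $\tau$-reduced component, or, failing a direct construction, to reduce to a smaller algebra via Jasso's $\tau$-tilting reduction combined with the semistability of $\tau$-perpendicular subcategories in the style of Lemma~\ref{lem:tau_perp_serre}, and locate the desired component there.

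The main obstacle is that the second step is precisely the open $\tau$-reduced BT II' Conjecture, so in full generality this strategy only establishes the equivalence of Demonet's Conjecture with BT II', not a proof of either. The realistic path forward is thus to restrict to classes of algebras where more structure is available, most notably to the $E$-tame class of Asai--Iyama, in which the stable and $\tau$-reduced BT II' Conjectures should become equivalent (as the Main Theorem asserts) and one may then feed in the already known positive results for special biserial and GLS algebras.
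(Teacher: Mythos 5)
The statement you were asked to prove is an open conjecture: the paper never proves it, and only establishes its equivalence with the $\tau$-reduced Brauer--Thrall II$'$ Conjecture (Proposition~\ref{prop:demonet_tau_btii}, via Lemma~\ref{lem:tau_reduced_fan} and Plamondon's Theorem~\ref{thm:plamondon}) together with the implication from the stable version and the converse for $E$-tame algebras. Your proposal correctly identifies the easy direction, reproduces exactly the paper's reduction (integer points of the fan $\leftrightarrow$ vanishing of $c_A$ on generically $\tau$-reduced components $\leftrightarrow$ the contrapositive of BT~II$'$), and honestly flags that the remaining step is the open conjecture itself; so it takes essentially the same approach as the paper and goes as far as the paper does.
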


\section{The $\tau$-reduced Brauer-Thrall Conjectures}

To illustrate the need of $\tau$-reduced components 
in the formulation of a $\tau$-tilted version of the second Brauer-Thrall Conjecture,
we start with an obvious $\tau$-tilted version of the first Brauer-Thrall Conjecture.

\begin{proposition}\label{prop:tau_bt_i}
    Let $A$ be a $\tau$-tilting infinite algebra.
    Then,
    for every $d\geq 0$
    there exists an indecomposable 
    $\tau$-rigid $V\in\mod(A)$
    with $\dim(V) \geq d$.
\end{proposition}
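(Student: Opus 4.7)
The plan is to show that, for each dimension vector $\bd\in\K_0(A)^+$, only finitely many isomorphism classes of $\tau$-rigid $A$-modules live in $\Rep(A,\bd)$, then to conclude by a pigeonhole argument combined with Theorem~\ref{thm:tau_finite}.

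The first step is to verify that every $\tau$-rigid $V\in\Rep(A,\bd)$ has an open $\GL(K,\bd)$-orbit. By the Auslander-Reiten formula,
\[
\dim_K \Ext^1_A(V,V) \le \dimhom_A(V,\tau_A V) = 0,
\]
hence $\Ext^1_A(V,V)=0$. Voigt's lemma then yields
\[
\dim\Rep(A,\bd) - \dim\cO(V) \le \dim_K \Ext^1_A(V,V) = 0,
\]
so $\cO(V)$ is open in $\Rep(A,\bd)$. Since $\Rep(A,\bd)$ has only finitely many irreducible components and each contains at most one open $\GL(K,\bd)$-orbit, only finitely many isomorphism classes of $\tau$-rigid $A$-modules can have dimension vector $\bd$.

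Next, the set $\{\bd\in\K_0(A)^+\mid \sum_{i\in Q_0}d_i\le d\}$ is finite, so the previous step implies that only finitely many isomorphism classes of $\tau$-rigid $A$-modules have total dimension at most $d$.

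Finally, Theorem~\ref{thm:tau_finite} ensures that a $\tau$-tilting infinite algebra $A$ admits infinitely many isomorphism classes of indecomposable $\tau$-rigid modules; by the preceding step these cannot all have total dimension $\le d$, so for every $d\ge 0$ some indecomposable $\tau$-rigid $V\in\mod(A)$ satisfies $\dim V\ge d$. The only real technical point is the orbit-openness step; once it is in place the conclusion is essentially a pigeonhole.
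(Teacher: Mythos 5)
Your proposal is correct and follows essentially the same route as the paper: reduce $\tau$-rigidity to $\Ext^1$-rigidity via Auslander--Reiten duality, use Voigt's lemma to conclude that the orbit (closure) of a $\tau$-rigid module is dense in a component of $\Rep(A,\bd)$, bound the number of such modules per dimension by the finite number of components, and finish by pigeonhole using the characterization of $\tau$-tilting finiteness. You merely spell out the Voigt/AR-duality step that the paper leaves implicit.
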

\begin{proof}
    Let $\bd\in \K_0(A)^+$ be a fixed dimension vector
    and consider the variety of representations $\Rep(A,\bd)$.
    The orbit closure $\overline{\cO(V)}$ of any $\tau$-rigid module $V$ in $\Rep(A,\bd)$ 
    is an irreducible component of $\Rep(A,\bd)$ by Voigt's Isomorphism \cite[Section 1.1]{Gab74}.
    But $\Rep(A,\bd)$ has only finitely many irreducible components as an affine variety.
    This proves that there are only finitely many isomorphism classes 
    of $\tau$-rigid $A$-modules $V$
    with fixed dimension vector $\dimv(V) = \bd$.
    By the pigeonhole principle,
    there must exist indecomposable $\tau$-rigid $A$ modules
    of arbitrarily large dimension
    if $A$ is $\tau$-tilting infinite.
\end{proof}

The proof shows that there cannot be infinitely many pairwise non-isomorphic
$\tau$-rigid modules of a fixed dimension.
Therefore, we need to enlarge our class of $\tau$-rigid modules.
A natural geometric generalization is provided by Geiß-Leclerc-Schröer's
generically $\tau$-reduced components.
Note that Voigt's Isomorphism \cite[Section 1.1]{Gab74} and 
Auslander-Reiten Duality \cite{AR75} yield the inequalities
\begin{align*}
    c_A(\cZ) \leq \dimext^1_A(\cZ) \leq \dimhom^\tau_A(\cZ).
\end{align*}
A component $\cZ\in\Irr(A)$ is \emph{generically $\tau$-reduced} if
\begin{align*}
    c_A(\cZ) = \dimhom_A^\tau(\cZ,\cZ).
\end{align*}
Write $\Irr^\tau(A)$ for the set of generically $\tau$-reduced components in $\Irr(A)$.
They first appeared for certain Jacobi algebras in \cite[Section 1.5]{GLS12} 
under the name \emph{strongly reduced components}
and were defined and studied in full generality in \cite{Pla13}.
Indeed, generically $\tau$-reduced components are generically reduced.
Further, there is a bijective correspondence
\begin{align*}
    \{\text{Isomorphism classes of $\tau$-rigid $A$-modules $V$}\}
    \xrightarrow{1:1}
    \{\text{$\cZ\in\Irr^\tau(A)$ with $c_A(\cZ) =0$}\}
\end{align*}
given by sending a module $V$ to the closure of its orbit $\overline{\cO(V)}$.
We refer to 
\cite[Section 2]{Pla13}, 
\cite[Section 5]{CILFS15}, 
\cite[Sections 2--4]{GLFS22} and 
\cite{GLFS23} 
for further background.
It is now natural to come up with the following $\tau$-reduced version of the second Brauer-Thrall Conjecture.

\begin{conjecture}[$\tau$-reduced Brauer-Thrall II'] \label{conj:tau_bt_ii}
    Let $A$ be a $\tau$-tilting infinite algebra.
    Then there exist $\bd\in\K_0(A)^+$
    and a generically indecomposable $\cZ\in\Irr^\tau(A,\bd)$
    with $c_A(\cZ) \geq 1$.
\end{conjecture}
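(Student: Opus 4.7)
The final statement is a conjecture, so I will sketch a strategy toward it rather than a complete proof. The plan has two ingredients: first translate the statement into the language of $\g$-vectors via Plamondon's classification of generically $\tau$-reduced components, and then produce integer $\g$-vectors outside the $\g$-vector fan by exploiting stable modules.

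Plamondon's classification \cite{Pla13} gives a bijection $\Irr^\tau(A) \xrightarrow{\sim} \K_0^{\proj}(A)$ sending a $\tau$-reduced component $\cZ$ to the generic $\g$-vector $\bmgamma$ of its presentations. Under this bijection $c_A(\cZ) = e_A(\bmgamma)$, and generic indecomposability of $\cZ$ corresponds to indecomposability of $\bmgamma$ in $\K^{b}(A)$. Moreover the components $\cZ$ with $c_A(\cZ) = 0$ are precisely the orbit closures of $\tau$-rigid pairs, whose $\g$-vectors span the cones of $\fan(A)$. Hence the conjecture is equivalent to finding, for every $\tau$-tilting infinite $A$, an indecomposable $\bmgamma\in\K_0^{\proj}(A)$ with $\bmgamma\notin\fan(A)$; modulo the indecomposability refinement this is exactly Demonet's Conjecture, as Proposition \ref{prop:demonet_tau_btii} formalises.

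To actually produce such a $\bmgamma$, the natural route is via semistability and the stable Brauer-Thrall II' Conjecture. If $\theta\in\K_0(A)^*$ admits a positive-dimensional moduli of $\theta$-stable $A$-modules, then $\theta$ cannot lie in the interior of a maximal cone of $\fan(A)$: on such interiors, Lemma \ref{lem:tau_perp_serre} combined with Jasso's $\tau$-tilting reduction \cite{J15} and the results of \cite{BST19} forces $\cW(\theta)$ to collapse to zero, precluding any stable module at all. Iterating this reduction along the face lattice pushes $\theta$ out of every cone of $\fan(A)$, producing an integer vector outside $\fan(A)$; its canonical decomposition then contains an indecomposable summand $\bmgamma\notin\fan(A)$, and the associated $\tau$-reduced component is the desired $\cZ$.

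The main obstacle is the input to this strategy: constructing positive-dimensional families of stable modules over an arbitrary $\tau$-tilting infinite algebra. The paper handles this under the $E$-tameness hypothesis, where the coincidence of $e_A$ with $\dimext^1_A$ on $\tau$-reduced components allows one to convert a $\tau$-reduced component with $c_A(\cZ)\geq 1$ back into a positive-dimensional moduli of stables, thereby closing the cycle of implications. Dropping $E$-tameness appears to require genuinely new tools, such as a tame-wild dichotomy adapted to $\tau$-tilting theory or Auslander-Reiten-type constructions of bricks inside nontrivial $\tau$-perpendicular subcategories; this looks to me to be the essential difficulty preventing an unconditional proof.
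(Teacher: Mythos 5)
The statement is a conjecture which the paper leaves open, and your sketch reproduces exactly the paper's own reductions: the equivalence with Demonet's Conjecture via Plamondon's classification (Proposition \ref{prop:demonet_tau_btii}, through Lemmas \ref{lem:tau_hom_e_invariant} and \ref{lem:tau_reduced_fan}), the implication from the stable Brauer-Thrall II' Conjecture (Proposition \ref{prop:demonet_stable_btii}), and the closing of the cycle under $E$-tameness (Lemma \ref{lem:stable_family_from_e_tame}). Two small corrections: Plamondon's $\g$ is injective with retraction $\cP$ but not a bijection onto all of $\K_0^{\proj}(A)$ (a general $\bmgamma$ decomposes as $\g(\cP(\bmgamma))$ plus a negative projective part $-\bm{\delta}$); and for $\theta=\bmgamma^\vee$ with $\bmgamma\in\fan(A)$ the category $\cW(\theta)$ does not collapse to zero --- it is equivalent to $\mod(B)$ for a finite-dimensional algebra $B$ by \cite{J15}, so it has finitely many simples and hence finitely many $\theta$-stables, which is what forces $\dim\Mod(A,\bd,\theta)^\st=0$.
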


\begin{example}
    Let $A = KQ$ be the path algebra of the Kronecker quiver 
    $
    \begin{tikzcd} [sep = .5cm]
        Q\colon 2 \ar[r, shift left = .75mm]\ar[r, shift right = .75mm] & 1
    \end{tikzcd}
    $.
    Then $\Rep(A,\bd)$ is irreducible and smooth for every $\bd\in\K_0(A)^+$.
    Since $A$ is hereditary, it follows that $\Rep(A,\bd)$ is generically $\tau$-reduced.
    It is well known that $\Rep(A,\bd)$ is generically indecomposable 
    with $c_A(\Rep(A,\bd)) \geq 1$ if and only if $\bd = (1,1)$.
    Therefore, $A$ satisfies our $\tau$-reduced Brauer-Thrall II' Conjecture \ref{conj:tau_bt_ii}
    and provides a counterexample for a $\tau$-reduced version of Brauer-Thrall II.
    More generally, every representation infinite path algebra satisfies Conjecture \ref{conj:tau_bt_ii}.
\end{example}

Note that taking $\g$-vectors defines constructible maps 
$\g\colon \Rep(A,\bd) \to \K_0^{\proj}(A)$
for every $\bd\in\K_0(A)^+$
hence any $\cZ\in\Irr(A)$ possesses a \emph{generic $\g$-vector} $\g(\cZ)\in\K_0^{\proj}(A)$.
Plamondon proves that $\tau$-reduced components are determined by their \emph{generic $\g$-vector}:
Let $\bmgamma\in\K_0^{\proj}(A)$.
Building upon Palu's constructibility of cokernels
\cite[Lemma 2.3]{Pal11},
Plamondon constructs in 
\cite[Lemma 2.11]{Pla13} 
a regular map on an open dense subset $\cU\subseteq\Pres(A,\bmgamma)$
\begin{align*}
    \Phi\colon \cU \to \Rep(A,\bd)
\end{align*}
for some $\bd\in\K_0(A)^+$ 
such that $\Phi(\vec{P}) \cong \Cok(p)$ for all $\vec{P} = (p,P_1,P_0)\in\cU$.
He then considers the irreducible and closed subset 
\[
    \cP(\gamma) := \overline{\Phi(\cU)}\subseteq \Rep(A,\bd).
\]

\begin{theorem}\cite[Theorem 1.2]{Pla13}\label{thm:plamondon}
    The maps
    \[
        \begin{tikzcd}
            \Irr^\tau(A)
            \ar[rr, shift left = 1mm, "\g"]
            &&
            \ar[ll, shift left = 1mm, "\cP"]
            \K_0^{\proj}(A)
        \end{tikzcd}
    \]
    satisfy 
    \begin{enumerate}[label = (\roman*)]
        \item for all $\cZ\in\Irr^\tau(A)$ is $\cP(\g(\cZ)) = \cZ$;
        \item for all $\bmgamma\in\K_0^{\proj}(A)$ is $-\bm{\delta}:=\g(\cP(\bmgamma))-\bmgamma\in \K_0^{\proj}(A)$ with $e_A(\g(\cP(\bmgamma)),\bm{\delta}) = e_A(\bm{\delta},\g(\cP(\bmgamma))) = 0$.
    \end{enumerate}
\end{theorem}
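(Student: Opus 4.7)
The proof hinges on Plamondon's regular map $\Phi\colon \cU \to \Rep(A,\bd)$, defined on a dense open $\cU \subseteq \Pres(A,\bmgamma)$, which assigns to a presentation its cokernel; as the image of an irreducible variety under a regular map, $\cP(\bmgamma) = \overline{\Phi(\cU)}$ is automatically irreducible and closed in $\Rep(A,\bd)$. The overall strategy is to use $\Phi$ together with its ``quasi-inverse'' of taking minimal projective presentations to identify $\tau$-reduced components with their generic $\g$-vectors, then to control the discrepancy between $\bmgamma$ and $\g(\cP(\bmgamma))$ via a direct sum decomposition in $\K^{b}(A)$.

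For (i), fix $\cZ\in\Irr^\tau(A,\bd)$ with $\g(\cZ) = \bmgamma$. For generic $V \in \cZ$ one has $\g(V) = \bmgamma$, and a minimal projective presentation of $V$ assembles into a point $\vec{P}\in\Pres(A,\bmgamma)$ with $\Cok(p) \cong V$. Genericity of $V$ together with density of $\cU$ places $\vec{P}\in\cU$, so $V \cong \Phi(\vec{P}) \in \Phi(\cU) \subseteq \cP(\bmgamma)$ (up to $\GL(K,\bd)$-orbit). Since this holds on a dense open subset of $\cZ$ and $\cP(\bmgamma)$ is closed, $\cZ\subseteq\cP(\bmgamma)$. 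As $\cZ$ is a maximal irreducible closed subset of $\Rep(A,\bd)$ and $\cP(\bmgamma)$ is irreducible and closed, maximality forces $\cP(\bmgamma)=\cZ$.

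For (ii), given arbitrary $\bmgamma\in\K_0^{\proj}(A)$, set $\cZ:=\cP(\bmgamma)$, $\bmgamma':=\g(\cZ)$ and $\bm{\delta}:=\bmgamma-\bmgamma'$. The key step is to produce, for generic $\vec{P}=(p,P_1,P_0)\in\Pres(A,\bmgamma)$, a direct sum decomposition $\vec{P}=\vec{Q}\oplus\vec{D}$ in $\K^{b}(A)$, where $\vec{Q}$ is the minimal projective presentation of $V:=\Cok(p)$ (of class $\bmgamma'$) and $\vec{D}$ is a residual complex of class $\bm{\delta}$ with trivial cokernel, i.e., supported cohomologically only in degree $-1$. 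Because $\Hom$-groups in $\K^{b}(A)$ respect direct sums and $\vec{D}$ is homotopically decoupled from $\vec{Q}$, the generic values of $\dim\Hom_{K^{b}(A)}(\vec{Q},\Sigma\vec{D})$ and $\dim\Hom_{K^{b}(A)}(\vec{D},\Sigma\vec{Q})$ both vanish. These are, by definition, $e_A(\bmgamma',\bm{\delta})$ and $e_A(\bm{\delta},\bmgamma')$, respectively.

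The principal obstacle is justifying the generic splitting $\vec{P}=\vec{Q}\oplus\vec{D}$ in $\K^{b}(A)$ and verifying the orthogonality of $\vec{D}$ to $\vec{Q}$ in the homotopy category. This rests on Palu's constructibility of kernels and cokernels \cite{Pal11} combined with upper semicontinuity of the Hom-dimension functions on $\Pres(A,\bmgamma)$; once the decomposition is secured, the Euler pairing identities in (ii) follow formally from the direct sum structure, while the fact that $\bm{\delta}$ represents a genuine class in $\K_0^{\proj}(A)$ is automatic from the definition.
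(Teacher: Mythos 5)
First, a structural point: the paper does not prove this theorem --- it is imported wholesale from Plamondon (\cite[Theorem 1.2]{Pla13}) and used as a black box --- so there is no internal proof to compare yours against; I can only assess your sketch as a reconstruction of Plamondon's argument, and as such it has two genuine gaps. In part (i), your argument nowhere uses the hypothesis that $\cZ$ is generically $\tau$-reduced, yet the identity $\cP(\g(\cZ))=\cZ$ fails for general irreducible components: for $A=K[x]/(x^3)$ and $\cZ=\Rep(A,(2))$ the generic module is $K[x]/(x^2)$, whose minimal presentation is $A\xrightarrow{x^2}A$, so $\g(\cZ)=0$ while $\cP(0)$ is the zero component. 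The step that breaks is ``a minimal projective presentation of $V$ assembles into a point $\vec{P}\in\Pres(A,\bmgamma)$'': by construction $\Pres(A,\bmgamma)$ parametrizes only \emph{reduced} presentations, those with $[P_0]=\bmgamma_0$, $[P_1]=\bmgamma_1$ and $\bmgamma_0\cdot\bmgamma_1=0$, whereas the minimal presentation of $V$ may have $P_0$ and $P_1$ sharing indecomposable summands (as in the example, where $P_0=P_1=A$). Even when the minimal presentation is reduced, the assertion that it lands in the dense open $\cU$ is exactly what needs proving; ``genericity of $V$ plus density of $\cU$'' does not deliver it, because the locus of minimal presentations of elements of $\cZ$ could a priori sit inside the complement of $\cU$. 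Plamondon's actual argument is a dimension count relating $\dim\Pres(A,\bmgamma)$, the fibres of $\Phi$, and $c_A(\cZ)$, and strong reducedness is precisely the condition making the image of $\Phi$ dense in $\cZ$.

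In part (ii), the decomposition $\vec{P}\cong\vec{P}(V)\oplus\Sigma D$ in $K^b(A)$ with $V=\Cok(p)$ and $\bm{\delta}=-[D]$ is indeed automatic, and one of the two vanishings is formal: $\Hom_{K^b(A)}(\vec{P}(V),\Sigma^2 D)=0$ for degree reasons. But the other is not. Since $D$ is projective, one computes $\Hom_{K^b(A)}(\Sigma D,\Sigma\vec{P}(V))\cong\Hom_A(D,V)$, which is a nontrivial condition on the generic cokernel and not a consequence of $\Sigma D$ being ``homotopically decoupled''. Indeed, for non-generic presentations it fails: take indecomposable projectives $P\not\cong P'$ with $\Hom_A(P',P)\neq 0$ and $\bmgamma=[P]-[P']$; the point $p=0$ of $\Pres(A,\bmgamma)$ has $V=P$, $D=P'$ and $\Hom_A(D,V)\neq 0$. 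The content of the theorem is that this pairing vanishes \emph{generically}, which Plamondon establishes via the semicontinuity of $e_A$ and the construction of $\Phi$; your sketch asserts it rather than proving it. The claim that ``the Euler pairing identities follow formally from the direct sum structure'' is therefore incorrect for $e_A(\bm{\delta},\g(\cP(\bmgamma)))$.
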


The bridge between presentations and $\tau$-tilting theory is provided by a precursor of the Auslander-Reiten formula:

\begin{lemma} \cite[Lemma 2.6]{Pla13} \label{pre_ar}
    Let $V,W\in \mod(A)$ 
    with minimal projective presentations $\vec{P}(V)$ and $\vec{P}(W)$.
    Then
    \begin{align*}
        \Hom_{K^b(A)}(\vec{P}(V),\Sigma(\vec{P}(W))) \cong \D\Hom_A(W,\tau_A(V)).
    \end{align*}
\end{lemma}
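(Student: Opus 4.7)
The plan is to identify both sides of the claimed isomorphism with the cokernel
\[
    \Cok\bigl(\Hom_A(P_0,W) \xrightarrow{p^\ast} \Hom_A(P_1,W)\bigr),
\]
where $\vec{P}(V) = (P_1 \xrightarrow{p} P_0)$ and $\vec{P}(W) = (Q_1 \xrightarrow{q} Q_0)$ denote the minimal projective presentations and $p^\ast(\phi) = \phi \circ p$.

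For the right-hand side, I would start from the definition $\tau_A V = \D\operatorname{Tr} V$ with its defining exact sequence $\Hom_A(P_0,A) \xrightarrow{p^\ast} \Hom_A(P_1,A) \to \operatorname{Tr} V \to 0$, apply the tensor--dual adjunction $\Hom_A(W,\D N) \cong \D(N \otimes_A W)$ to obtain $\D\Hom_A(W, \tau_A V) \cong \operatorname{Tr} V \otimes_A W$, and then invoke right exactness of $- \otimes_A W$ together with the standard isomorphism $\Hom_A(P,A) \otimes_A W \cong \Hom_A(P,W)$ for finitely generated projective $P$ to recognise the target cokernel.

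For the left-hand side, I would compute via the total Hom complex $\Hom^\bullet(\vec{P}(V), \vec{P}(W))$, which has nonzero terms only in cohomological degrees $-1, 0, 1$. Explicitly,
\[
    \Hom^0 = \Hom_A(P_1,Q_1) \oplus \Hom_A(P_0,Q_0), \qquad \Hom^1 = \Hom_A(P_1,Q_0),
\]
with differential $d^0(h_1, h_0) = q h_1 - h_0 p$. Since $\Hom_{K^{b}(A)}(\vec{P}(V), \Sigma\vec{P}(W))$ is the $H^1$ of this complex and the degree-$2$ term vanishes, it equals $\Hom_A(P_1, Q_0) / (\operatorname{im}(q_\ast) + \operatorname{im}(p^\ast))$.

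The last step is a diagram chase matching this quotient with the target cokernel. Post-composition with the surjection $\pi\colon Q_0 \twoheadrightarrow W$ gives a surjection $\pi_\ast\colon \Hom_A(P_1, Q_0) \twoheadrightarrow \Hom_A(P_1, W)$ by projectivity of $P_1$, and the same projectivity paired with the surjection $Q_1 \twoheadrightarrow \operatorname{im}(q) = \ker \pi$ identifies $\ker(\pi_\ast)$ with $\operatorname{im}(q_\ast)$. The induced isomorphism $\Hom_A(P_1, Q_0)/\operatorname{im}(q_\ast) \cong \Hom_A(P_1, W)$ then sends the residual relations $\operatorname{im}(p^\ast)$ onto $\operatorname{im}(p^\ast\colon \Hom_A(P_0, W) \to \Hom_A(P_1, W))$, where projectivity of $P_0$ is used to lift any $\gamma\colon P_0 \to W$ to some $\alpha\colon P_0 \to Q_0$. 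Minimality of the presentations plays no role in the argument; the only delicate point is bookkeeping of signs and shift conventions in the total Hom complex, which I expect to be the main technical obstacle.
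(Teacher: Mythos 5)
The paper offers no proof of this lemma; it is quoted verbatim from \cite[Lemma 2.6]{Pla13}, and your argument is essentially the standard one found there and in the references behind it (compute both sides as $\Cok\bigl(\Hom_A(P_0,W)\xrightarrow{p^\ast}\Hom_A(P_1,W)\bigr)$, the right-hand side via $\tau_A=\D\operatorname{Tr}$ and tensor--Hom duality, the left-hand side via the Hom complex). All the individual steps check out. One correction to your closing remark: minimality is not entirely irrelevant. Minimality of $\vec{P}(W)$ is indeed unused, and both of your identifications with the cokernel are valid for arbitrary projective presentations; but the equality $\tau_A V=\D\operatorname{Tr}V$ with $\operatorname{Tr}V$ computed from the chosen presentation $P_1\xrightarrow{p}P_0$ is exactly where minimality of $\vec{P}(V)$ enters. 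If $\vec{P}(V)$ had a superfluous summand $P'\to 0$ in degree $-1$, the left-hand side would acquire an extra direct summand $\Hom_A(P',W)$ while $\D\Hom_A(W,\tau_A V)$ would not, so the isomorphism as stated genuinely fails for non-minimal presentations of $V$.
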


\begin{lemma} \label{lem:tau_hom_e_invariant}
    Let $\cZ,\cZ'\in\Irr^\tau(A)$. Then
    \begin{align*}
        \dimhom^\tau_A(\cZ,\cZ') = e_A(\g(\cZ),\g(\cZ')) && \dimhom^\tau_A(\cZ) = e_A(\g(\cZ)). 
    \end{align*}
\end{lemma}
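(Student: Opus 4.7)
The plan is to transfer both generic values to Plamondon's variety of reduced presentations, where Lemma \ref{pre_ar} directly identifies $e_A$ with $\dimhom^\tau_A$. Set $\bmgamma := \g(\cZ)$ and $\bmgamma' := \g(\cZ')$; by Theorem \ref{thm:plamondon} we have $\cZ = \cP(\bmgamma)$ and $\cZ' = \cP(\bmgamma')$, and the generic $\g$-vectors on $\cZ,\cZ'$ are exactly $\bmgamma,\bmgamma'$. Plamondon's cokernel maps $\Phi,\Phi'$ are regular on open dense subsets $\cU \subseteq \Pres(A,\bmgamma)$ and $\cU' \subseteq \Pres(A,\bmgamma')$ with dense images in $\cZ$ and $\cZ'$ respectively.

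The crux is to show that for generic $\vec{P} = (p,P_1,P_0) \in \cU$, the presentation $\vec{P}$ is the minimal projective presentation of $V := \Cok(p)$. One half, $\operatorname{Im}(p) \subseteq \operatorname{rad}(P_0)$, is automatic on all of $\Pres(A,\bmgamma)$: the composition $P_1 \xrightarrow{p} P_0 \twoheadrightarrow P_0/\operatorname{rad}(P_0)$ lies in $\Hom_A(P_1, P_0/\operatorname{rad}(P_0))$, which has dimension $\bmgamma_0 \cdot \bmgamma_1 = 0$ by the identification $\Hom_A(P_i,S_j) = \delta_{ij}K$. For the other half, $\operatorname{Ker}(p) \subseteq \operatorname{rad}(P_1)$, we argue by contradiction: if generically some indecomposable summand $P_j \subseteq P_1$ were contained in $\operatorname{Ker}(p)$, deleting it would produce a reduced presentation of $V$ with $\g$-vector $\bmgamma + [P_j] \neq \bmgamma$, contradicting $\g(V) = \bmgamma$ on a dense subset of $\cZ$.

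With minimality established, Lemma \ref{pre_ar} yields for generic $(\vec{P},\vec{Q}) \in \cU \times \cU'$
\[
e_A(\vec{P},\vec{Q}) = \dim_K \Hom_{K^b(A)}(\vec{P}, \Sigma\vec{Q}) = \dim_K \Hom_A(\Cok(q), \tau_A\Cok(p)) = \dimhom^\tau_A(\Cok(p), \Cok(q)).
\]
Since $\Phi \times \Phi'$ is dominant onto $\cZ \times \cZ'$, the intersection of the three open dense subsets of $\cU \times \cU'$---(a) where both $\vec{P},\vec{Q}$ are minimal, (b) where $e_A$ attains its generic value $e_A(\bmgamma,\bmgamma')$, and (c) the preimage under $\Phi \times \Phi'$ of the locus where $\dimhom^\tau_A$ attains $\dimhom^\tau_A(\cZ,\cZ')$---is non-empty, and evaluating the chain at any point of this intersection yields $e_A(\bmgamma,\bmgamma') = \dimhom^\tau_A(\cZ,\cZ')$. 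The diagonal identity $\dimhom^\tau_A(\cZ) = e_A(\g(\cZ))$ is obtained by the same argument with $\cZ' = \cZ$ and restricting to the diagonal $\vec{P} = \vec{Q}$ in $\cU$.

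The main obstacle is securing the minimality step, which ultimately hinges on the identity $\g(\cP(\bmgamma)) = \bmgamma$ for $\bmgamma = \g(\cZ)$ from Theorem \ref{thm:plamondon}(i); the remainder is routine transport of upper semicontinuous generic values along a dominant constructible map.
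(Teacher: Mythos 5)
Your proof is correct and follows essentially the same route as the paper: transport the generic values to $\Pres(A,\g(\cZ))\times\Pres(A,\g(\cZ'))$ along Plamondon's cokernel map and apply Lemma \ref{pre_ar} at a point in the intersection of the relevant dense open subsets. The only difference is that you explicitly verify that the generic reduced presentation is minimal (needed to invoke Lemma \ref{pre_ar}), a point the paper's proof leaves implicit; your justification of it via $\g(\cP(\bmgamma))=\bmgamma$ is sound.
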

\begin{proof}
    Set $\bmgamma := \g(\cZ)$ and $\bmgamma' := \g(\cZ')$.
    Let $\cV\subseteq \cZ$ and $\cV'\subseteq\cZ'$ be open dense subsets 
    with $\dimhom^\tau_A(V,V') = \dimhom^\tau_A(\cZ,\cZ')$ 
    for all $V\in\cV$ and $V'\in\cV'$.
    Let $\cU\subseteq \Pres(A,\bmgamma)$ and $\cU'\subseteq\Pres(A,\bmgamma')$ be open dense subsets
    with $e_A(\vec{P},\vec{P}') = e_A(\bmgamma,\bmgamma')$
    for all $\vec{P}\in\Pres(A,\bmgamma)$ and $\vec{P}'\in\Pres(A,\bmgamma')$.
    For $\vec{P}\in\cU\cap\Phi^{-1}(\cV)$ and $\vec{P'}\in\cU'\cap\Phi^{-1}(\cV')$
    we find
    \begin{align*}
        e_A(\bmgamma,\bmgamma') = e_A(\vec{P},\vec{P'}) = \dimhom^\tau_A(V,V') = \dimhom^\tau_A(\cZ,\cZ').
    \end{align*}
    where the second equality holds by Lemma \ref{pre_ar}.
    Similarly, one shows $e_A(\bmgamma) = \dimhom^\tau_A(\cZ)$.
\end{proof}

\begin{lemma} \label{lem:tau_reduced_fan}
    For $\bmgamma\in\K_0^{\proj}(A)$, the following are equivalent
    \begin{enumerate}[label = (\roman*)]
        \item \label{enum:fan}
        $\bmgamma\in\fan(A)$;
        \item \label{enum:e_vanishes}
        $e_A(\bmgamma) = 0$;
        \item \label{enum:c_vanishes}
        $c_A(\cP(\bmgamma)) = 0$.
    \end{enumerate}
\end{lemma}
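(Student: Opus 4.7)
The plan is to reduce all three conditions to a single direct computation of $e_A(\vec{P})$ via a decomposition of presentations in $K^b(A)$. The key structural observation is that $\bmgamma_0 \cdot \bmgamma_1 = 0$ built into the definition of $\Pres(A,\bmgamma)$ forces every $p\colon P_{\bmgamma_1}\to P_{\bmgamma_0}$ to land in $\mathrm{rad}(P_{\bmgamma_0})$, because $\Hom_A(P_{\bmgamma_1}, \mathrm{top}(P_{\bmgamma_0}))=0$ when $\bmgamma_0$ and $\bmgamma_1$ share no indecomposable summand. Consequently $\mathrm{top}(V) = \mathrm{top}(P_{\bmgamma_0})$ for $V = \Cok(p)$, so $P_0(V) = P_{\bmgamma_0}$, and the induced surjection $P_{\bmgamma_1} \twoheadrightarrow P_1(V)$ splits to give $P_{\bmgamma_1} = P_1(V) \oplus P_\delta$ with $p|_{P_\delta}=0$. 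Thus every $\vec{P} \in \Pres(A,\bmgamma)$ is isomorphic in $K^b(A)$ to $\vec{P}(V) \oplus P_\delta[1]$, with the projective class $[P_\delta] = \g(V) - \bmgamma$ depending only on the isomorphism class of $V$.

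Using this decomposition and Lemma \ref{pre_ar}, I would compute the four contributions to $\dim\Hom_{K^b(A)}(\vec{P},\Sigma \vec{P})$ to obtain
\[
    e_A(\vec{P}) \;=\; \dim \Hom_A(V,\tau_A V) \;+\; \dim \Hom_A(P_\delta,V),
\]
where the diagonal $e_A(\vec{P}(V)) = \dim\Hom_A(V,\tau_A V)$ is Lemma \ref{pre_ar}, the cross term $\Hom_{K^b(A)}(\vec{P}(V), \Sigma P_\delta[1])$ vanishes for degree reasons, the other cross term $\Hom_{K^b(A)}(P_\delta[1], \Sigma \vec{P}(V))$ identifies with $\Hom_A(P_\delta,V)$ by projectivity of $P_\delta$, and $e_A(P_\delta[1]) = 0$ trivially.

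With this formula all three equivalences fall out. For \ref{enum:fan}$\Rightarrow$\ref{enum:e_vanishes}, non-singularity of $\fan(A)$ realises $\bmgamma$ as $\g(V,P_\delta)$ for a $\tau$-rigid pair $(V,P_\delta)$, so both summands of the formula vanish at the explicit presentation $\vec{P}(V) \oplus P_\delta[1]$ and thus $e_A(\bmgamma) = 0$ by upper semicontinuity. For \ref{enum:e_vanishes}$\Rightarrow$\ref{enum:fan}, the formula applied at a generic presentation produces a $\tau$-rigid pair $(V,P_\delta)$ with $\bmgamma = \g(V,P_\delta) \in \sfC(V,P_\delta) \subseteq \fan(A)$. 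For \ref{enum:e_vanishes}$\Leftrightarrow$\ref{enum:c_vanishes}, Plamondon's Theorem \ref{thm:plamondon}(ii) provides the automatic generic vanishing $\dim\Hom_A(P_\delta,V) = e_A(\bm\delta,\g(V)) = 0$ of the second summand, and Lemma \ref{lem:tau_hom_e_invariant} together with $\cP(\bmgamma) \in \Irr^\tau(A)$ identifies the first summand generically with $c_A(\cP(\bmgamma))$, so the formula reduces to $e_A(\bmgamma) = c_A(\cP(\bmgamma))$.

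The hardest part will be carefully establishing the $K^b(A)$-decomposition $\vec{P} \cong \vec{P}(V) \oplus P_\delta[1]$ and computing the four summands of $e_A(\vec{P}(V) \oplus P_\delta[1])$; once this is in place, the three equivalences become essentially bookkeeping via Lemmas \ref{pre_ar}, \ref{lem:tau_hom_e_invariant} and Plamondon's Theorem \ref{thm:plamondon}.
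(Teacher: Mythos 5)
Your proposal is correct and follows essentially the same route as the paper: both rest on decomposing a reduced presentation as $\vec{P}(V)\oplus\Sigma(P_\delta)$ with $\vec{P}(V)$ minimal, computing $e_A$ via Lemma \ref{pre_ar}, and invoking Plamondon's Theorem \ref{thm:plamondon} together with Lemma \ref{lem:tau_hom_e_invariant} for the passage to $c_A(\cP(\bmgamma))$. Your explicit four-term expansion $e_A(\vec{P})=\dim\Hom_A(V,\tau_A V)+\dim\Hom_A(P_\delta,V)$ merely packages into a single identity what the paper distributes over the separate implications.
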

\begin{proof}
    By definition of the $\g$-vector fan,
    we have $\bmgamma\in\fan(A)$ if and only if 
    there exists a $\tau$-rigid pair $(V,P)$
    with $\g(V,P) = \bmgamma$.

    $\ref{enum:fan}\Rightarrow\ref{enum:e_vanishes}, \ref{enum:c_vanishes}$:
    Let $(V,P)$ be a $\tau$-rigid pair with $\g(V,P) = \bmgamma$.
    Take a minimal projective presentation $\vec{P}'$ of $V$
    and consider $\vec{P} := \vec{P}' \oplus \Sigma(P)\in\Pres(A,\bmgamma)$.
    Then $e_A(\vec{P}) = e_A(\vec{P}') = \dimhom^\tau_A(V) = 0$.
    This shows $e_A(\bmgamma) = 0$.
    Moreover, $\overline{\cO(V)}$ is a $\tau$-reduced component
    with $\g(V) = \g(\cP(\bmgamma))$, hence $\cP(\bmgamma) = \overline{\cO(V)}$
    by Theorem \ref{thm:plamondon}.
    This shows $c_A(\cP(\bmgamma)) = 0$.

    $\ref{enum:e_vanishes}\Rightarrow \ref{enum:fan}$:
    Let $\vec{P}\in\Pres(A,\bmgamma)$ with $e_A(\vec{P}) = 0$.
    Write $\vec{P} = \vec{P}' \oplus \Sigma(P)$ for some $P\in\proj(A)$ 
    and $\vec{P}$ a minimal projective presentation of some $V\in\mod(A)$.
    Then $\dimhom^\tau_A(V,V) = e_A(\vec{P}',\vec{P}') \leq e_A(\vec{P},\vec{P}) = 0$
    hence $(V,P)$ is a $\tau$-rigid pair with $\g(V,P) = \g(\vec{P}) = \bmgamma$.

    $\ref{enum:c_vanishes}\Rightarrow \ref{enum:e_vanishes}$:
    Let $\bmgamma\in\K_0^{\proj}(A)$ 
    and set $\bmgamma' := \g(\cP(\bmgamma))$.
    Write $\bmgamma = \bmgamma' + \bm{\delta}$
    as in Plamondon's Theorem \ref{thm:plamondon}.
    With Lemma \ref{lem:tau_hom_e_invariant} we find:
    \[
        e_A(\bmgamma) = e_A(\bmgamma') + e_A(\bm{\delta}) + e_A(\bmgamma',\bm{\delta}) + e_A(\bm{\delta},\bmgamma') = e_A(\bmgamma') = c_A(\cP(\bmgamma)) = 0
    \]
\end{proof}

\begin{proposition}\label{prop:demonet_tau_btii}
    Let $A$ be a finite-dimensional algebra.
    The following statements are equivalent
    \begin{enumerate}[label = (\roman*)]
        \item \label{enum:prop1_demonet}
        $A$ satisfies Demonet's Conjecture \ref{conj:demonet};
        \item \label{enum:prop1_tau_btii}
        $A$ satisfies the $\tau$-reduced Brauer-Thrall II' Conjecture \ref{conj:tau_bt_ii}.
    \end{enumerate}
\end{proposition}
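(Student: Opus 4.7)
The plan is to view both conjectures, through Lemma \ref{lem:tau_reduced_fan} and Plamondon's Theorem \ref{thm:plamondon}, as parallel assertions about the existence of an integer $\g$-vector outside $\fan(A)$ when $A$ is $\tau$-tilting infinite. The ``only if'' direction of Demonet's Conjecture is immediate from Theorem \ref{thm:tau_finite}: if $A$ is $\tau$-tilting finite, then $\fan(A) = \K_0^{\proj}(A)_\bR$, so certainly $\K_0^{\proj}(A)\subseteq \fan(A)$. Hence both conjectures are genuinely statements about $\tau$-tilting infinite algebras, and we have to compare them only in this case.

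For the implication $\ref{enum:prop1_tau_btii}\Rightarrow\ref{enum:prop1_demonet}$, I would assume $A$ is $\tau$-tilting infinite and pick a generically indecomposable $\cZ\in\Irr^\tau(A,\bd)$ with $c_A(\cZ)\geq 1$ as provided by Conjecture \ref{conj:tau_bt_ii}. Setting $\bmgamma := \g(\cZ)$, Theorem \ref{thm:plamondon}(i) gives $\cP(\bmgamma) = \cZ$, hence $c_A(\cP(\bmgamma))\geq 1$, and Lemma \ref{lem:tau_reduced_fan} forces $\bmgamma \in \K_0^{\proj}(A)\setminus\fan(A)$, which yields Demonet.

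The reverse implication $\ref{enum:prop1_demonet}\Rightarrow\ref{enum:prop1_tau_btii}$ needs more care. Still under the standing assumption that $A$ is $\tau$-tilting infinite, Conjecture \ref{conj:demonet} produces some $\bmgamma\in\K_0^{\proj}(A)\setminus\fan(A)$, and Lemma \ref{lem:tau_reduced_fan} converts it into a component $\cZ := \cP(\bmgamma)\in\Irr^\tau(A)$ with $c_A(\cZ)\geq 1$. However, there is no a priori reason for this $\cZ$ to be generically indecomposable. To upgrade it, I would invoke the canonical decomposition of generically $\tau$-reduced components due to Plamondon \cite{Pla13} and Cerulli Irelli--Labardini-Fragoso--Schröer \cite{CILFS15}: one writes $\cZ = \overline{\cZ_1\oplus\cdots\oplus\cZ_n}$ with each $\cZ_i$ generically indecomposable and $\tau$-reduced, the summands pairwise $e_A$-orthogonal, and $\g(\cZ) = \sum_i \g(\cZ_i)$. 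Combining Lemma \ref{lem:tau_hom_e_invariant} with the bilinear expansion of $e_A$ and the vanishing of the cross terms gives
\[
    c_A(\cZ) \;=\; e_A(\g(\cZ)) \;=\; \sum_{i} e_A(\g(\cZ_i)) \;=\; \sum_{i} c_A(\cZ_i),
\]
so at least one summand $\cZ_i$ satisfies $c_A(\cZ_i)\geq 1$ and serves as the desired generically indecomposable witness for Conjecture \ref{conj:tau_bt_ii}.

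The main obstacle is precisely this upgrade from ``generically $\tau$-reduced with at least one parameter'' to ``generically \emph{indecomposable} $\tau$-reduced with at least one parameter''. Everything else is a direct translation through Lemma \ref{lem:tau_reduced_fan} and Plamondon's bijection. Fortunately the canonical decomposition, together with additivity of $c_A$ along $e_A$-orthogonal summands, is by now standard in the $\tau$-reduced setting, so this ingredient should be available off the shelf; only the precise citation needs to be assembled.
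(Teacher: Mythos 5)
Your proof is correct and follows the same route as the paper, whose proof of this proposition is literally a one-line appeal to Lemma \ref{lem:tau_reduced_fan}, Lemma \ref{lem:tau_hom_e_invariant} and Theorem \ref{thm:plamondon}. The only point you elaborate beyond that citation — upgrading $\cP(\bmgamma)$ to a generically \emph{indecomposable} witness via the canonical decomposition of \cite[Theorem 5.11]{CILFS15} and the vanishing of the $e_A$-cross terms — is a genuine detail the paper leaves implicit, and your treatment of it is sound.
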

\begin{proof}
    This is an immediate consequence of 
    Lemmas \ref{lem:tau_reduced_fan}, \ref{lem:tau_hom_e_invariant} and 
    Plamondon's Theorem \ref{thm:plamondon}.
\end{proof}

\section{The stable Brauer-Thrall Conjectures}

With similar geometric arguments as in the proof of Proposition \ref{prop:tau_bt_i},
Mousavand-Paquette \cite[Theorem 6.2]{MP23} prove a stable analog of the first Brauer-Thrall Conjecture.
Another proof using $\bc$- and $\bg$-vectors, 
thus valid over not necessarily algebraically closed fields, 
is due to Schroll-Treffinger \cite[Theorem 1.1]{ST22}. 
Both work with bricks instead of stables, 
but their bricks are well known to be stable by \cite[Proposition 3.13]{BST19}.

\begin{proposition}[Stable Brauer-Thrall I]\label{prop:stable_bt_i}
    Let $A$ be a $\tau$-tilting infinite algebra.
    Then, for every $d\geq 0$
    there exists a stable $V\in\mod(A)$
    with $\dim(V)\geq d$.
\end{proposition}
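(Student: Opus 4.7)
The plan is to follow the geometric strategy of Proposition \ref{prop:tau_bt_i} but applied to bricks rather than $\tau$-rigid modules, exploiting the equivalence between bricks and stable modules from \cite[Proposition 3.13]{BST19}: every brick is $\theta$-stable for a suitable $\theta \in \K_0(A)^*_\bR$, and conversely every $\theta$-stable module has trivial endomorphism ring. Combined with Theorem \ref{thm:tau_finite}, the hypothesis that $A$ is $\tau$-tilting infinite produces infinitely many pairwise non-isomorphic stable modules (equivalently, bricks), and the task reduces to proving that these cannot all be of bounded dimension.

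I would argue by contradiction and assume that every brick $B \in \mod(A)$ satisfies $\dim(B) \leq N$ for some fixed $N \geq 0$. Only finitely many $\bd \in \K_0(A)^+$ satisfy $\sum_i d_i \leq N$, so by pigeonhole some $\bd_0$ would support infinitely many non-isomorphic bricks. The brick locus
\begin{align*}
    \cB := \{V \in \Rep(A,\bd_0) \mid \hom_A(V,V) = 1\} \subseteq \Rep(A,\bd_0)
\end{align*}
is open by upper semicontinuity of $\dimend_A$, and every orbit contained in $\cB$ has the fixed dimension $\dim\GL(K,\bd_0) - 1$. Infinitely many orbits in $\cB$ would then force at least one irreducible component $\cZ$ of $\Rep(A,\bd_0)$ meeting $\cB$ to satisfy $\dim \cZ \geq \dim \GL(K,\bd_0)$, so that $\cZ$ is generically a brick with $c_A(\cZ) \geq 1$.

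The main obstacle is that the existence of a single such component $\cZ$ does not by itself contradict $\tau$-tilting finiteness: the Kronecker algebra has $\Rep(A,(1,1))$ of exactly this form while remaining $\tau$-tilting infinite, and it escapes the bounded-bricks hypothesis only because its preprojective and preinjective families supply bricks of unbounded dimension. To close the argument, one must exploit the \emph{global} bound $N$ on all brick dimensions. Here I would invoke either the moduli-theoretic upper-semicontinuity argument of Mousavand-Paquette \cite{MP23}, which leverages the finiteness of $\Irr(A,\bd)$ for each $\bd$ with $\sum_i d_i \leq N$ together with the uniform orbit-dimension behaviour of the brick locus across all such $\bd$, or the more combinatorial route of Schroll-Treffinger \cite{ST22} via $\bc$- and $\bg$-vectors: bounded brick dimensions bound the $\bc$-vectors labelling walls of $\fan(A)$, which forces only finitely many distinct walls and hence only finitely many $\tau$-tilting pairs, contradicting $\tau$-tilting infiniteness by Theorem \ref{thm:tau_finite}.
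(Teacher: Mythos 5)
Your endpoint coincides with the paper's: Proposition \ref{prop:stable_bt_i} is not proved in the text but is imported from \cite[Theorem~6.2]{MP23} and \cite[Theorem~1.1]{ST22}, so deferring to those references is exactly what the paper does. The genuine gap is in the bridge you build between bricks and stable modules. You assert, citing \cite[Proposition~3.13]{BST19}, an ``equivalence between bricks and stable modules'', i.e.\ that \emph{every} brick is $\theta$-stable for some $\theta$. That is not what the cited result provides: it identifies the semistable subcategory $\cW(\theta)$ with the $\tau$-perpendicular subcategory $\cW(V,P)$ of a $\tau$-rigid pair when $\theta$ lies suitably inside the corresponding cone of $\fan(A)^\vee$. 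What one actually extracts is that a brick occurring as a \emph{simple object} of some $\cW(\theta)=\cW(V,P)$ is $\theta$-stable; this applies to the particular bricks produced by Mousavand--Paquette and Schroll--Treffinger (they arise as such simples, e.g.\ as wall labels), which is precisely why the paper can upgrade their brick statements to a stable statement. A general brick need not be of this form, and ``every brick is stable'' is not a theorem you may invoke. This matters for your logic: you run the contradiction from ``all bricks have dimension at most $N$'', which is stronger than the negation of the statement to be proved (boundedness of all \emph{stable} modules), so a contradiction from it only yields the brick version of Brauer--Thrall I unless the large bricks ultimately produced are known to be stable.

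The geometric middle portion is correct as far as it goes, and you rightly diagnose that it cannot close: a component that is generically a brick with at least one parameter is exactly what the Kronecker algebra exhibits while being $\tau$-tilting infinite, and is the desired \emph{conclusion} of Conjecture \ref{conj:tau_bt_ii} rather than a contradiction with anything. The repair is to discard both the contradiction scaffolding and the general brick--stable identification: quote \cite[Theorem~1.1]{ST22} or \cite[Theorem~6.2]{MP23} directly for bricks of arbitrarily large dimension, and then check that \emph{those} bricks are simple objects of subcategories $\cW(\theta)=\cW(V,P)$, hence $\theta$-stable, as the paper indicates.
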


A brick version of Brauer-Thrall II' was formulated in 
\cite[Conjecture 1.3.(2)]{M22} and \cite[Conjecture 2]{STV21}.
We strengthen their conjecture slightly to a stable version.
Given a dimension vector $\bd\in\K_0(A)^+$ 
and a weight $\theta\in\K_0(A)^*$,
denote by $\Rep(A,\bd,\theta)^\st$
the open subvariety of $\Rep(A,\bd)$
consisting of $\theta$-stable representations.
King \cite{K94} constructs a fine moduli space 
of $\theta$-stable $A$-modules in dimension $\bd$.
This is a quasi-projective variety
$\Mod(A,\bd,\theta)^\st$ together with a surjective map of varieties
\begin{align*}
    \pi\colon \Rep(A,\bd,\theta)^\st \twoheadrightarrow \Mod(A,\bd,\theta)^\st
\end{align*}
such that $\pi(V) = \pi(W)$ if and only if $V \cong W$.

\begin{conjecture}[Stable Brauer-Thrall II'] \label{conj:stab_bt_ii}
    Let $A$ be a $\tau$-tilting infinite algebra.
    Then there exist $\bd\in K_0(A)^+$
    and $\theta \in \K_0(A)^*$
    such that $\dim \Mod(A,\bd,\theta)^\st \geq 1$.
\end{conjecture}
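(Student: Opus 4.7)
The conjecture is open in full generality; my plan is to establish it in the $E$-tame case via the Main Theorem and to identify the obstruction in general. Since (iii) $\Rightarrow$ (i) of the Main Theorem holds unconditionally and (i) $\Leftrightarrow$ (ii) is Proposition \ref{prop:demonet_tau_btii}, the real content is the implication (ii) $\Rightarrow$ (iii), which I would supply for $E$-tame $A$ in the final Section.

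Concretely, assuming $A$ is $\tau$-tilting infinite, I would start from a generically indecomposable $\cZ \in \Irr^\tau(A, \bd)$ with $c_A(\cZ) \geq 1$ as furnished by Conjecture \ref{conj:tau_bt_ii}, and construct a weight $\theta \in \K_0(A)^*$ together with a $\tau$-rigid pair $(V', P)$ such that $\g(V', P)^\vee = \theta$ and $\cZ \subseteq \cW(V', P)$. The existence of $(V', P)$ should be extracted from Plamondon's Theorem \ref{thm:plamondon} applied to a $\g$-vector on the boundary of the cone containing $\bmgamma := \g(\cZ)$, together with the $e_A$-vanishing provided by Lemma \ref{pre_ar}. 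Then Lemma \ref{lem:tau_perp_serre} places $\cZ$ inside the Serre subcategory $\cW(V', P) \subseteq \cW(\theta)$, so every $V \in \cZ$ is $\theta$-semistable, and the positive parameter $c_A(\cZ) \geq 1$ supplies a candidate positive-dimensional family in $\Rep(A, \bd, \theta)$.

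The central technical step is then to upgrade $\theta$-semistability to $\theta$-stability on a dense open subset of $\cZ$. By the Serre property, $V \in \cW(V', P)$ is $\theta$-stable if and only if $V$ has no proper non-zero subobject in $\cW(V', P)$, i.e.\ $V$ is simple in $\cW(V', P)$; in particular, simplicity there forces $V$ to be a brick in $\mod(A)$ by Schur, though the converse may fail. Granting generic simplicity, the image of the open stable locus of $\cZ$ under $\pi\colon \Rep(A, \bd, \theta)^{\st} \to \Mod(A, \bd, \theta)^{\st}$ has dimension $\geq c_A(\cZ) \geq 1$, since distinct $\GL(K, \bd)$-orbits in $\cZ$ map to distinct isomorphism classes.

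The main obstacle is the generic simplicity of $V \in \cZ$ inside $\cW(V', P)$. In the $E$-tame case, the Asai-Iyama $E$-invariant formalism, combined with the $e_A$-invariant vanishing from Lemma \ref{lem:tau_hom_e_invariant}, should tightly control the Jordan-H\"older filtration of generic $V \in \cZ$ within $\cW(V', P)$ and force it to be trivial. Beyond $E$-tame, generic representations of $\tau$-reduced components may well admit non-trivial filtrations in $\cW(V', P)$ despite being generically indecomposable in $\mod(A)$, and I see no general mechanism to rule this out without a substantively new geometric or homological input.
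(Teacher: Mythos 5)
Your overall framing (reduce everything to the implication from the $\tau$-reduced to the stable conjecture for $E$-tame algebras) matches the paper, but the core of your argument has a gap that cannot be repaired along the lines you propose. First, the preliminary step is already problematic: since $c_A(\cZ)\geq 1$, Lemma \ref{lem:tau_reduced_fan} gives $\g(\cZ)\notin\fan(A)$, so $\bmgamma=\g(\cZ)$ lies in no cone of the $\g$-vector fan and there is no $\tau$-rigid pair $(V',P)$ with $\g(V',P)=\bmgamma$; it is unclear what ``the boundary of the cone containing $\bmgamma$'' should mean. More seriously, your central step --- generic $\theta$-stability (equivalently, generic simplicity in $\cW(V',P)$) of the elements of $\cZ$ --- is false in general, even for $E$-tame algebras. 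For the algebra $H_n$ with $n\geq 2$ from Section 6, the component $\cZ\in\Irr(H_n,(n,n))$ is generically $\tau$-reduced with $c_{H_n}(\cZ)=n\geq 1$ and $\dimhom^\tau_{H_n}(\cZ,\cZ)=0$ (as $H_n$ is $E$-tame), its generic element $V_\lambda$ is indecomposable (its endomorphism ring is the local ring of lower-triangular Toeplitz matrices in $N_n$), yet $\dimend_{H_n}(\cZ)=n>1$. So the generic element is not even a brick, hence is $\theta$-stable for no weight $\theta$ whatsoever; no control of Jordan--H\"older filtrations can change this.

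The paper's Lemma \ref{lem:stable_family_from_e_tame} circumvents this obstruction entirely by not asking the elements of $\cZ$ to be stable. Using $\dimhom^\tau_A(\cZ,\cZ)=0$, one inductively extracts an infinite family $V_0,V_1,\dots$ of generic points of $\cZ$ with $\Hom_A(V_i,\tau_A(V_j))=0$ for all $i\neq j$, while $\Hom_A(V_i,\tau_A(V_i))\neq 0$ since $c_A(\cZ)\geq 1$. Setting $\theta:=\g(\cZ)^\vee$, Lemma \ref{lem:tau_perp_serre} is applied to the \emph{non-$\tau$-rigid} pairs $(V_j,0)$ --- the lemma does not require $\tau$-rigidity --- to exhibit each $\cW(V_j)$ as a Serre subcategory of $\cW(\theta)$. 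Iterated torsion-pair truncations then produce for each $i$ a non-zero $\theta$-semistable factor of $V_i$ lying in $\bigcap_{j\neq i}\cW(V_j)$, and one passes to a $\theta$-stable factor $X_i$ thereof; the vanishing $\Hom_A(X_i,X_j)=0$ for $i\neq j$ makes the $X_i$ pairwise non-isomorphic. Their dimensions are bounded, so by pigeonhole infinitely many share a dimension vector $\bd$ --- in general different from $\dimv(\cZ)$ --- whence $\dim\Mod(A,\bd,\theta)^\st\geq 1$. The lesson is that the positive-dimensional family of stables must be sought among proper factors of the generic elements of $\cZ$, not among those elements themselves.
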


\begin{example} \label{ex:path_algebras}
    Let $A = KQ$ be a representation infinite path algebra.
    Then there exists an idempotent $e\in A$ such that $A/\ideal{e} \cong KQ'$
    for a quiver $Q'$ of affine type. Thus, we may assume that $Q$ is of affine type.
    The homogeneous quasi-simple regular $A$-modules are a 1-parameter family of stables 
    with respect to the defect.
    Therefore, $A$ satisfies Conjecture \ref{conj:stab_bt_ii}.
\end{example}

\begin{example} \label{ex:gls_algebras}
    Generalizing Example \ref{ex:path_algebras},
    let $A = H(C,D,\Omega)$ be a GLS algebra \cite{GLS17i}
    associated to a generalized Cartan matrix $C$ with symmetrizer $D$ and orientation $\Omega$.
    Geiß-Leclerc-Schröer prove in \cite{GLS20}
    that $H$ is $\tau$-tilting finite if and only if $C$ is of finite type.
    Assume that $C$ is of affine type.
    In an upcoming preprint \cite{Pfe23gls}
    we provide a generic classification of locally free representations of $H$.
    In the course of our proof of this generic classification,
    we construct a 1-parameter family of stable representations of $H$.
    In particular, GLS algebras satisfy Conjecture \ref{conj:stab_bt_ii}.
\end{example}

\begin{example} \label{ex:special_biserial_algebras}
    Let $A$ be a $\tau$-tilting infinite special biserial algebra.
    It is proved in \cite[Theorem 1.1]{STV21} 
    that there exists a family of pairwise non-isomorphic
    band modules $V_\lambda$ for $\lambda\in K^\times$
    which are all bricks.
    But as band modules they satisfy 
    $\tau_A(V_\lambda)\cong V_\lambda$ 
    by the discussion in \cite[pp.165--166]{BR87}.
    Thus $V_\lambda$ must be stable 
    with respect to the weight $\g(V_\lambda)^\vee$
    by Auslander-Reiten's $\g$-vector formula (\ref{ar_g_vector}).
\end{example}

\begin{proposition}\label{prop:demonet_stable_btii}
    Let $A$ be a finite-dimensional algebra.
    Consider the following statements
    \begin{enumerate}[label = (\roman*)]
        \item \label{enum:prop2_stable_btii}
        $A$ satisfies the stable Brauer-Thrall II' Conjecture \ref{conj:stab_bt_ii}.
        \item \label{enum:prop2_demonet}
        $A$ satisfies Demonet's Conjecture \ref{conj:demonet}.
    \end{enumerate}
    Then the implication 
    $\ref{enum:prop2_stable_btii}\Rightarrow\ref{enum:prop2_demonet}$
    holds.
\end{proposition}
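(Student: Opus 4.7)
The plan is to establish the non-trivial direction of Demonet's Conjecture \ref{conj:demonet}: for any $\tau$-tilting infinite algebra $A$ satisfying the stable Brauer-Thrall II' Conjecture \ref{conj:stab_bt_ii}, I will exhibit an integer class $\bmgamma\in\K_0^{\proj}(A)\setminus\fan(A)$. The easy direction ``$\tau$-tilting finite $\Rightarrow\K_0^{\proj}(A)_\bZ\subseteq\fan(A)$'' is immediate from Theorem \ref{thm:tau_finite}, so I assume throughout that $A$ is $\tau$-tilting infinite. Invoking the stable Brauer-Thrall II' hypothesis furnishes $\bd\in\K_0(A)^+$ and $\theta\in\K_0(A)^*$ with $\dim\Mod(A,\bd,\theta)^\st\geq 1$, and via the Euler duality $(-)^\vee\colon\K_0^{\proj}(A)\xrightarrow{\sim}\K_0(A)^*$ I let $\bmgamma\in\K_0^{\proj}(A)$ be the unique integer class with $\bmgamma^\vee=\theta$. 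The claim is then $\bmgamma\notin\fan(A)$.

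Suppose for contradiction $\bmgamma\in\fan(A)$. Since $\fan(A)$ is a rational non-singular polyhedral fan by \cite{DIJ19}, hence closed under taking faces, $\bmgamma$ lies in the relative interior of a unique cone $\sfC(V,P)$ indexed by a $\tau$-rigid pair $(V,P)$. As $(-)^\vee$ is a linear homeomorphism over $\bR$, it places $\theta$ in the relative interior of the dual cone $\sfC(V,P)^\vee$. Under this interior condition the Serre inclusion $\cW(V,P)\subseteq\cW(\theta)$ of Lemma \ref{lem:tau_perp_serre} upgrades to an equality, by the Brüstle-Smith-Treffinger refinement already flagged in the excerpt (cf.\ \cite[Section 3.3]{BST19}).

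The final step invokes Jasso's $\tau$-tilting reduction \cite{J15}: the $\tau$-perpendicular category $\cW(V,P)$ is equivalent, as an exact abelian category, to $\mod(C)$ for some finite-dimensional algebra $C$. A routine unwinding of definitions identifies the $\theta$-stable $A$-modules with the simple objects of $\cW(\theta)$: any proper non-zero submodule of a $\theta$-stable module has strictly negative $\theta$-value and therefore cannot lie in $\cW(\theta)$, whereas every simple object of $\cW(\theta)$ is $\theta$-stable. Through the equivalence $\cW(\theta)\simeq\mod(C)$ these simples correspond to simple $C$-modules, of which there are only finitely many up to isomorphism. This contradicts the positive-dimensionality of King's moduli space $\Mod(A,\bd,\theta)^\st$, whose points parametrize pairwise non-isomorphic $\theta$-stable $A$-modules of dimension $\bd$ over the algebraically closed field $K$.

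The main obstacle I foresee is the interior step: one must lean on the non-singular fan structure of $\fan(A)$ from \cite{DIJ19} to pick the unique minimal cone containing $\bmgamma$, so that $\theta$ is genuinely interior to $\sfC(V,P)^\vee$ and both Lemma \ref{lem:tau_perp_serre} and the BST upgrade become applicable. Once this interior condition is secured, the rest is a formal consequence of the results already collected in the preceding sections.
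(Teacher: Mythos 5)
Your proposal is correct and follows essentially the same route as the paper: identify $\theta=\bmgamma^\vee$ for an integral $\bmgamma\in\fan(A)$, use \cite[Proposition 3.13]{BST19} to get $\cW(\theta)=\cW(V,P)$, Jasso's reduction to see $\cW(V,P)\simeq\mod(C)$ has finitely many simples, and the identification of $\theta$-stables with simples of $\cW(\theta)$ to force $\dim\Mod(A,\bd,\theta)^\st=0$. Your extra care in placing $\bmgamma$ in the relative interior of the minimal cone of the non-singular fan is a point the paper's proof passes over silently, but it is the same argument.
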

\begin{proof}
    Let $\bmgamma\in\fan(A)\cap\K_0^{\proj}(A)$ and $\theta := \bmgamma^\vee$
    We find $\bmgamma = \g(V,P)$ for a $\tau$-rigid pair $(V,P)$.
    By \cite[Proposition 3.13]{BST19} we have $\cW(\theta) = \cW(V,P)$.
    By \cite[Theorem 3.8]{J15} we have $\cW(V,P)\simeq\mod(B)$
    for a finite-dimensional algebra $B$.
    In particular, there are only finitely many simple objects in $\cW(\theta)$.
    But $\theta$-stable modules are precisely simple objects in $\cW(\theta)$.
    Therefore, there are only finitely many $\theta$-stable modules.
    In other words $\dim\Mod(A,\bd,\theta)^\st = 0$ for all $\bd\in\K_0(A)^+$.
\end{proof}

\section{$E$-tame algebras}

Recently, different notions of ``tameness'' in $\tau$-tilting theory gained some interest,
see e.g. \cite[Section 3.3]{BST19}, \cite{AY23}, \cite{PY23} and \cite[Section 6]{AI21}.
They also play a role in understanding the $\tau$-tilted Brauer-Thrall II Conjectures.
It is for example enough to prove Demonet's Conjecture 
for the class of \emph{$\g$-tame algebras},
those algebras $A$ whose $\g$-vector fan $\fan(A)$ is dense in $\K_0^{\proj}(A)_\bR$.
On the other hand, 
the main result of this note is that all our $\tau$-tilted versions of Brauer-Thrall II
are equivalent for the class of $\E$-tame algebras.

\begin{definition} \label{def:e_tame} \cite[Definition 6.3]{AI21} \cite[Definition 4.6]{DF15}
    A finite-dimensional algebra $A$ is \emph{$E$-tame} if $e_A(\bmgamma,\bmgamma) = 0$ 
    for all $\bmgamma\in\K_0^{\proj}(A)$.
\end{definition}

First, note that it
is enough to check $E$-tameness 
for generically indecomposable $\tau$-reduced components;
we include a proof for convenience:

\begin{lemma}\label{lem:e_tau_tame_equivalence}
    The following are equivalent
    for a finite-dimensional algebra $A$:
    \begin{enumerate}[label = (\roman*)]
        \item \label{enum:e_tame_equivalence}
        $A$ is $E$-tame.
        \item \label{enum:e_tame_indecomposable}
        Every $\bmgamma\in\K_0^{\proj}(A)$ with generically indecomposable $\Pres(A,\bmgamma)$ 
        satisfies $e_A(\bmgamma,\bmgamma) = 0$.
        \item \label{enum:tau_reduced_tame_equivalence}
        Every $\cZ\in\Irr^\tau(A)$ satisfies $\dimhom^\tau_A(\cZ,\cZ) = 0$.
        \item \label{enum:tau_reduced_tame_indecomposable}
        Every generically indecomposable $\cZ\in\Irr^\tau(A)$ satisfies $\dimhom^\tau_A(\cZ,\cZ) = 0$.
    \end{enumerate}
\end{lemma}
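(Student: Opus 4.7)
My plan is to run the cycle $\ref{enum:e_tame_equivalence}\Rightarrow\ref{enum:e_tame_indecomposable}\Rightarrow\ref{enum:tau_reduced_tame_indecomposable}\Rightarrow\ref{enum:tau_reduced_tame_equivalence}\Rightarrow\ref{enum:e_tame_equivalence}$. The two specializations $\ref{enum:e_tame_equivalence}\Rightarrow\ref{enum:e_tame_indecomposable}$ and $\ref{enum:tau_reduced_tame_equivalence}\Rightarrow\ref{enum:tau_reduced_tame_indecomposable}$ are immediate. The implications $\ref{enum:tau_reduced_tame_equivalence}\Rightarrow\ref{enum:e_tame_equivalence}$ and $\ref{enum:e_tame_indecomposable}\Rightarrow\ref{enum:tau_reduced_tame_indecomposable}$ will follow from Plamondon's correspondence combined with the already-established machinery; the core novel step is $\ref{enum:tau_reduced_tame_indecomposable}\Rightarrow\ref{enum:tau_reduced_tame_equivalence}$, which I will handle via generic decomposition of $\tau$-reduced components.

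For $\ref{enum:tau_reduced_tame_equivalence}\Rightarrow\ref{enum:e_tame_equivalence}$ the argument goes as follows: given $\bmgamma\in\K_0^{\proj}(A)$, Lemma \ref{lem:tau_reduced_fan} already reduces $e_A(\bmgamma) = 0$ to $c_A(\cP(\bmgamma)) = 0$, which by $\tau$-reducedness of $\cP(\bmgamma)\in\Irr^\tau(A)$ equals $\dimhom^\tau_A(\cP(\bmgamma))$, and this vanishes by $\ref{enum:tau_reduced_tame_equivalence}$. For $\ref{enum:e_tame_indecomposable}\Rightarrow\ref{enum:tau_reduced_tame_indecomposable}$ I would take a generically indecomposable $\cZ\in\Irr^\tau(A)$, set $\bmgamma := \g(\cZ)$, and observe that Plamondon's Theorem \ref{thm:plamondon} forces $\cZ = \cP(\bmgamma)$ with zero defect; hence a generic $\vec{P}\in\Pres(A,\bmgamma)$ is the minimal projective presentation of a generic indecomposable $V\in\cZ$. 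Since the endomorphism ring in $\K^{b}(A)$ of a minimal projective presentation of an indecomposable module coincides with $\End_A(V)$ (which is local), such presentations are indecomposable in $\K^{b}(A)$, so $\Pres(A,\bmgamma)$ is generically indecomposable. Hypothesis $\ref{enum:e_tame_indecomposable}$ then gives $e_A(\bmgamma) = 0$, and Lemma \ref{lem:tau_hom_e_invariant} converts this into $\dimhom^\tau_A(\cZ) = 0$.

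For the core step $\ref{enum:tau_reduced_tame_indecomposable}\Rightarrow\ref{enum:tau_reduced_tame_equivalence}$, I would invoke the canonical generic decomposition of generically $\tau$-reduced components from \cite{CILFS15} (see also \cite{GLFS22}): every $\cZ\in\Irr^\tau(A)$ admits a unique decomposition $\cZ = \cZ_1^{\oplus n_1}\oplus\cdots\oplus\cZ_k^{\oplus n_k}$ with each $\cZ_i\in\Irr^\tau(A)$ generically indecomposable, and with the indecomposable summands pairwise $\Hom^\tau$-orthogonal in the sense that $\dimhom^\tau_A(\cZ_i,\cZ_j) = 0$ for $i\neq j$. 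Since a generic $V\in\cZ$ splits as $\bigoplus V_i^{\oplus n_i}$ with $V_i\in\cZ_i$ generic, additivity of $\dimhom^\tau_A$ across direct sums gives
\[
    \dimhom^\tau_A(\cZ) = \sum_{i,j} n_i n_j\, \dimhom^\tau_A(\cZ_i,\cZ_j) = \sum_i n_i^2\, \dimhom^\tau_A(\cZ_i),
\]
which vanishes by $\ref{enum:tau_reduced_tame_indecomposable}$. The main obstacle will be extracting the generic decomposition theorem in exactly this form, with the pairwise $\Hom^\tau$-orthogonality of indecomposable summands, from the cited literature; this is the deepest ingredient of the proof.
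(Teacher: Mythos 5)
Your argument is correct and rests on the same essential ingredients as the paper's proof --- Plamondon's Theorem \ref{thm:plamondon}, Lemma \ref{lem:tau_hom_e_invariant}, and the generic Krull--Remak--Schmidt decomposition --- but arranges them differently. The paper proves \ref{enum:e_tame_indecomposable}$\Rightarrow$\ref{enum:e_tame_equivalence} and \ref{enum:tau_reduced_tame_indecomposable}$\Rightarrow$\ref{enum:tau_reduced_tame_equivalence} in parallel, invoking the generic decomposition on \emph{both} sides (for presentations via \cite[Theorem 4.4]{DF15} and \cite[Theorem 2.7]{Pla13}, for components via \cite[Theorem 5.11]{CILFS15}), and then closes the diagram with \ref{enum:e_tame_equivalence}$\Rightarrow$\ref{enum:tau_reduced_tame_equivalence} and \ref{enum:tau_reduced_tame_equivalence}$\Rightarrow$\ref{enum:e_tame_indecomposable}. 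Your single cycle needs the decomposition only on the component side, replacing its presentation-side counterpart by Lemma \ref{lem:tau_reduced_fan} (for \ref{enum:tau_reduced_tame_equivalence}$\Rightarrow$\ref{enum:e_tame_equivalence}) and by a direct check that $\Pres(A,\g(\cZ))$ is generically indecomposable when $\cZ$ is (for \ref{enum:e_tame_indecomposable}$\Rightarrow$\ref{enum:tau_reduced_tame_indecomposable}); this is a mild economy, and the ``main obstacle'' you anticipate is not one: the pairwise $\Hom^\tau$-orthogonality of the indecomposable summands is exactly the content of \cite[Theorem 5.11]{CILFS15}. Two small points should be tightened. First, the endomorphism ring in $\K^{b}(A)$ of a minimal presentation $\vec{P}(V)=(p,P_1,P_0)$ of an indecomposable $V$ need not \emph{coincide} with that of $V$: the canonical surjection onto it can have nonzero kernel, represented by chain maps $(g,0)$ with $g\colon P_1\to\ker(p)\subseteq\mathrm{rad}(P_1)$. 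That kernel lies in the Jacobson radical, so locality still transfers; alternatively, any direct summand of a minimal presentation with vanishing cokernel is a sum of contractibles and shifted projectives, contradicting minimality. Second, the hypotheses and conclusions are phrased with the two-variable invariants $e_A(\bmgamma,\bmgamma)$ and $\dimhom^\tau_A(\cZ,\cZ)$, whereas Lemma \ref{lem:tau_reduced_fan} and your displayed sum output the diagonal values $e_A(\bmgamma)$ and $\dimhom^\tau_A(\cZ)$; since generic values are minima by upper semicontinuity, the two-variable value is bounded above by the diagonal one, so every deduction goes through, but this comparison deserves a sentence.
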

\begin{proof}
    The implications 
    $\ref{enum:e_tame_equivalence} \Rightarrow \ref{enum:e_tame_indecomposable}$
    and $\ref{enum:tau_reduced_tame_equivalence} \Rightarrow \ref{enum:tau_reduced_tame_indecomposable}$
    are obvious. 
    The reversed implications 
    $\ref{enum:e_tame_indecomposable} \Rightarrow \ref{enum:e_tame_equivalence}$
    and $\ref{enum:tau_reduced_tame_indecomposable} \Rightarrow \ref{enum:tau_reduced_tame_equivalence}$ 
    follow immediately from the corresponding generic Krull-Remak-Schmidt decompositions, see
    \cite[Theorem 4.4]{DF15}\cite[Theorem 2.7]{Pla13} for presentations
    and \cite[Theorem 5.11]{CILFS15} for generically $\tau$-reduced components.

    $\ref{enum:e_tame_equivalence} \Rightarrow \ref{enum:tau_reduced_tame_equivalence}$:
    Let $\cZ\in\Irr^\tau(A)$ with generic $\g$-vector $\bmgamma := \g(\cZ)$. 
    We have seen in Lemma \ref{lem:tau_hom_e_invariant} that $\dimhom^\tau_A(\cZ,\cZ) = e_A(\bmgamma,\bmgamma)$
    and $e_A(\bmgamma,\bmgamma) = 0$ because $A$ is assumed to be $E$-tame.

    $\ref{enum:tau_reduced_tame_equivalence} \Rightarrow \ref{enum:e_tame_indecomposable}$:
    Let $\bmgamma\in\K_0^{\proj}(A)$ be generically indecomposable.
    Then $\bmgamma = \g(\cZ)$ for a generically $\tau$-reduced component
    or $\bmgamma = -\g(P)$ for a projective $P\in\proj(A)$
    by Plamondon's Theorem \ref{thm:plamondon}.
    In the former case we have $e_A(\bmgamma,\bmgamma) = \dimhom^\tau_A(\cZ,\cZ) = 0$ 
    by Lemma \ref{lem:tau_hom_e_invariant} and assumption.
    In the latter case we obviously have $e_A(\bmgamma,\bmgamma) = 0$.
\end{proof}

To motivate the concept of $E$-tameness, 
we introduce a more natural notion of generically $\tau$-reduced tameness.
Note that for a representation tame algebra $A$ 
the generic number of parameters 
of a generically indecomposable $\cZ\in\Irr(A)$
satisfies the bound $c_A(\cZ)\leq 1$.
Hence, we arrive at the following definition.

\begin{definition}
    A finite-dimensional algebra $A$ is \emph{generically $\tau$-reduced tame}
    if $c_A(\cZ)\leq 1$ for all generically indecomposable $\cZ\in\Irr^\tau(A)$.
\end{definition}

Of course, there is a corresponding reformulation 
in terms of varieties of reduced presentations
but we leave this to the reader.
The relation with representation and $E$-tameness is a consequence of well-known facts:

\begin{proposition} \label{prop:tame}
    Let $A$ be a finite-dimensional algebra.
    Consider the following statements:
    \begin{enumerate}[label = (\roman*)]
        \item \label{enum:rep_tame} $A$ is representation tame.
        \item \label{enum:gen_tau_tame} $A$ is generically $\tau$-reduced tame.
        \item \label{enum:e_tame} $A$ is $E$-tame.
    \end{enumerate}
    Then the implications 
    $\ref{enum:rep_tame} \Rightarrow \ref{enum:gen_tau_tame} \Rightarrow \ref{enum:e_tame}$
    hold.
\end{proposition}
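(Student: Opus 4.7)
I would treat the two implications separately. The first is a routine application of Crawley-Boevey's classical geometric theorem on tame algebras, while the second reduces via Lemma~\ref{lem:e_tau_tame_equivalence}\ref{enum:tau_reduced_tame_indecomposable} to a careful analysis of generically indecomposable $\tau$-reduced components of generic parameter count one.

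For $\ref{enum:rep_tame} \Rightarrow \ref{enum:gen_tau_tame}$, I would invoke the structural result due to Crawley-Boevey that, for a representation tame $A$ and any dimension vector $\bd$, the indecomposable locus of $\Rep(A, \bd)$ is covered -- off a proper closed subvariety -- by finitely many $\GL(K, \bd)$-invariant $1$-parameter families $\GL(K, \bd) \cdot \{M_\lambda\}_{\lambda \in K}$. Any generically indecomposable irreducible component $\cZ \subseteq \Rep(A, \bd)$ is then dominated by such a family, so $\dim \cZ \leq \dim \cO(V) + 1$ for its generic point $V$ and hence $c_A(\cZ) \leq 1$. Specializing to $\cZ \in \Irr^\tau(A)$ gives generically $\tau$-reduced tameness.

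For $\ref{enum:gen_tau_tame} \Rightarrow \ref{enum:e_tame}$, by Lemma~\ref{lem:e_tau_tame_equivalence}\ref{enum:tau_reduced_tame_indecomposable} it suffices to show that $\dimhom^\tau_A(\cZ, \cZ) = 0$ for every generically indecomposable $\cZ \in \Irr^\tau(A)$. Combining Lemma~\ref{lem:tau_hom_e_invariant}, the upper semicontinuity of $\dimhom^\tau_A$ forcing the product-generic value to lie below the diagonal-generic one, and the $\tau$-reducedness of $\cZ$, one obtains
\[
    \dimhom^\tau_A(\cZ, \cZ) \;\leq\; \dimhom^\tau_A(\cZ) \;=\; c_A(\cZ) \;\leq\; 1.
\]
The case $c_A(\cZ) = 0$ reduces $\cZ$ to an orbit closure of a $\tau$-rigid module, forcing both values to vanish. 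The remaining case $c_A(\cZ) = 1$ gives that a generic $V \in \cZ$ is an indecomposable $\tau$-reduced module with $\dim \Ext^1_A(V, V) = 1$, moving in a $1$-parameter family.

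The main obstacle is precisely this last case: to show that even though the diagonal-generic value of $\dimhom^\tau_A$ equals $1$, its product-generic counterpart drops to $0$. The intended strategy, guided by the homogeneous tubes of the Kronecker example, is to establish that the generic $V \in \cZ$ is a brick with $V \cong \tau_A V$ and that the modules in the corresponding $1$-parameter family are pairwise Hom-orthogonal, so that $\Hom_A(W, \tau_A V) = \Hom_A(W, V) = 0$ for independent generic $V, W \in \cZ$. Establishing this orthogonality for arbitrary generically $\tau$-reduced tame algebras would proceed via a reduction to a smaller algebra using Jasso's $\tau$-perpendicular subcategories~\cite{J15}, combined with the generic Krull-Remak-Schmidt theorem for generically $\tau$-reduced components from~\cite{CILFS15}.
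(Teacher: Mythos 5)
Your first implication is fine and matches the paper: the bound $c_A(\cZ)\leq 1$ for generically indecomposable components of a tame algebra is exactly what the paper quotes (from \cite[Lemma 3]{CC15inv}), and your derivation of it from Crawley-Boevey's parametrization theorem is the standard argument behind that citation. The chain $\dimhom^\tau_A(\cZ,\cZ)\leq\dimhom^\tau_A(\cZ)=c_A(\cZ)\leq 1$ and the disposal of the case $c_A(\cZ)=0$ are also correct.

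The problem is the case $c_A(\cZ)=1$, which you correctly identify as ``the main obstacle'' but then do not close: you only sketch an intended strategy (generic $V$ is a brick with $V\cong\tau_A V$, and independent generic points of $\cZ$ are Hom-orthogonal), without proving any of it, and the sketch itself is not obviously salvageable --- for an arbitrary algebra there is no a priori reason the generic element of a $c_A(\cZ)=1$ component should be a $\tau$-periodic brick, and deducing the orthogonality ``via Jasso reduction plus generic Krull-Remak-Schmidt'' is a hope rather than an argument. What the paper does at this point is invoke \cite[Theorem 1.5]{GLFS23}, which states precisely that for a generically indecomposable $\cZ\in\Irr^\tau(A)$ with $c_A(\cZ)\geq 1$ one has the \emph{strict} inequality $\dimhom^\tau_A(\cZ,\cZ)<\dimhom^\tau_A(\cZ)$; combined with $\dimhom^\tau_A(\cZ)=c_A(\cZ)=1$ this forces $\dimhom^\tau_A(\cZ,\cZ)=0$. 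That strict drop from the diagonal-generic to the product-generic value is a genuine recent theorem of Geiss--Labardini-Fragoso--Schr\"oer, not something recoverable from semicontinuity alone, and your proposal is missing exactly this input. As written, the implication $\ref{enum:gen_tau_tame}\Rightarrow\ref{enum:e_tame}$ is therefore not proved.
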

\begin{proof}
    $\ref{enum:rep_tame} \Rightarrow \ref{enum:gen_tau_tame}$:
    We already noted that $c_A(\cZ)\leq 1$ for generically indecomposable $\cZ\in\Irr(A)$
    (see \cite[Lemma 3]{CC15inv} for a proof).
    In particular, $A$ is generically $\tau$-reduced tame.

    $\ref{enum:gen_tau_tame}\Rightarrow\ref{enum:e_tame}$:
    Let $\cZ\in\Irr^\tau(A)$, we may assume that $\cZ$ is generically indecomposable by Lemma \ref{lem:e_tau_tame_equivalence}.
    Nothing is to show if $0 = c_A(\cZ) = \hom^\tau_A(\cZ)$.
    If $c_A(\cZ) = 1$, then $\hom^\tau_A(\cZ,\cZ) < \hom^\tau_A(\cZ) = c_A(\cZ)$ by
    \cite[Theorem 1.5]{GLFS23}. Thus, $\hom^\tau_A(\cZ,\cZ) = 0$.
\end{proof}

Let us next provide two examples.
The former shows that the implication 
$\ref{enum:e_tame} \Rightarrow \ref{enum:gen_tau_tame}$
in the previous proposition
does not hold.
The latter is a $\tau$-tilting infinite 
counterexample for the implication 
$\ref{enum:gen_tau_tame} \Rightarrow \ref{enum:rep_tame}$.

\begin{example}
    Let $n\geq 1$ and $H_n = KQ/I_n$ the algebra given by
    \[
            \begin{tikzcd}
                Q \colon &
                2 \ar[loop, in = 150, out = -150, distance = 5ex, "d"] 
                \ar[r, shift left = 1mm, "a"] \ar[r, shift right = 1mm, "b"'] &
                1 \ar[loop, in = 30, out = -30, distance = 5ex, "c"']
                &&&&
                I_n = \ideal{c^n,d^n, ca - ad, cb - bd}.
            \end{tikzcd}
    \]
    This is the Kronecker algebra for $n = 1$, 
    thus representation tame hence generically $\tau$-reduced tame
    and $E$-tame by Proposition \ref{prop:tame}.
    For general $n\geq 1$, this is the GLS algebra $H_n = H(C,nD,\Omega)$ from
    \cite{GLS17i} associated to
    \begin{align*}
        C = \rsm{ 2 & -2 \\ -2 & 2}, && nD = \rsm{n & 0 \\ 0 & n}, && \Omega = \{(1,2)\}. 
    \end{align*}
    Note that $c+d$ lies in the center of $H_n$ and $H_n/\ideal{c+d} \cong H_1$.
    By Eisele-Janssens-Raedschelders' reduction \cite[Equivalence (4.2)]{EJR18}
    $H_n$ is $\E$-tame since $H_1$ is $\E$-tame.
    Consider the unique irreducible component $\cZ\in\Irr(H_n, (n,n))$
    whose generic element is \emph{locally free},
    that means $c$ and $d$ act on generic $V\in\cZ$ by linear operators of maximal rank $n-1$; 
    see \cite[Proposition 3.1]{GLS18ii}.
    For $\lambda\in K^n$ consider the representation
    \[
        \begin{tikzcd}[ampersand replacement=\&]
            V_{\lambda}\colon 
            \& \&
            K^n \ar[loop, out=150, in=-150, distance=5ex, "\text{$N_n$}"']
            \ar[r, shift left = 1mm, "\text{$\bid_n$}"] 
            \ar[r, shift right = 1mm, "\text{$L_\lambda$}"']
            \&
            K^n \ar[loop, out=30, in=-30, distance=5ex, "\text{$N_n$}"]
        \end{tikzcd}
    \]
    where $\bid_n$ is the identity matrix of size $n$,
    $N_n$ is the lower triangular nilpotent Jordan block of size $n$
    and $L_\lambda$ is the lower triangular matrix with entries
    \begin{align*}
        (L_\lambda)_{ij}
        :=
        \begin{cases}
            \lambda_{i-j+1} & \text{if $i \geq j$} \\
            0 & \text{else.}
        \end{cases}
        &&
        L_\lambda =
        \left(
        \begin{matrix}
            \lambda_1 & 0 & \cdots & 0 \\
            \lambda_2 & \lambda_1 & \cdots & 0 \\
            \vdots & \vdots & \ddots & 0 \\
            \lambda_n & \lambda_{n-1} & \cdots & \lambda_1
        \end{matrix}
        \right).
    \end{align*}
    Then $V_\lambda\in\cZ$ and they satisfy
    \begin{align*}
        \dimhom_{H_n}(V_\lambda,V_\mu)
        =
        \begin{cases}
            0 & \text{if $\lambda_1\neq\mu_1$,} \\
            n & \text{if $\lambda = \mu$,}
        \end{cases}
    \end{align*}
    for all $\lambda,\mu\in K^n$.
    By comparing dimensions one finds
    \begin{align*}
        \cZ = \overline{\bigcup_{\lambda\in K^n} \cO(V_\lambda)}.
    \end{align*}
    Moreover, the generic element of $\cZ$ has projective dimension at most 1 
    by \cite[Theorem 1.2]{GLS17i}. 
    Therefore,
    \[
        c_{H_n}(\cZ) = \dimhom^\tau_{H_n}(\cZ) = \dimext^1_{H_n}(\cZ) = \dimend_{H_n}(\cZ) = n,
    \]
    the first equality holds because $\cZ$ is generically $\tau$-reduced,
    the second follows from Auslander-Reiten duality,
    and the third holds by \cite[Proposition 4.1]{GLS17i}. 
\end{example}

\begin{example}
    Consider the algebra $H = KQ/I$ given by
    \[
    \begin{tikzcd}
        Q\colon ~
        2 \ar[r, "\alpha"] &
        1 \ar[loop, out=30, in=-30, distance=5ex, "\varepsilon"] &&&&
        I := \ideal{\varepsilon^4}.
    \end{tikzcd}
    \]
    This is the GLS algebra $H = H(C,D,\Omega)$ \cite{GLS17i} associated to
    \begin{align*}
        C = \rsm{ 2 & -1 \\ -4 & 2}, && D = \rsm{4 & 0 \\ 0 & 1}, && \Omega = \{(1,2)\}. 
    \end{align*}
    The generalized Cartan matrix $C$ is of affine type.
    We show in \cite{Pfe23gls}
    that $H$ is generically $\tau$-reduced tame.
    On the other hand, its Galois-covering has a convex hypercritical subcategory
    of double extended type $\widetilde{\widetilde{\D}}_5$
    thus $H$ is representation wild.
\end{example}

\section{Proof of the Main Theorem} \label{sec:main_proof}

In this final section we finish our proof of the Main Theorem \ref{thm:main} of the present short note.
It remains to consider $E$-tame algebras
thus assume from now on that $A$ is $E$-tame and $\tau$-tilting infinite.
By \ref{enum:demonet_tau_reduced_bt_ii} 
there exists $\cZ\in\Irr^\tau(A)$ with $c_A(\cZ)\geq 1$.
Since $A$ is assumed to be $E$-tame, 
we must have $\dimhom^\tau_A(\cZ,\cZ) = 0$ according to Lemma \ref{lem:e_tau_tame_equivalence}.
Our final Lemma \ref{lem:stable_family_from_e_tame} below constructs infinitely many pairwise non-isomorphic stable $A$-modules
as factors of generic elements of $\cZ$.
This then finishes our proof of Theorem \ref{thm:main}.

\begin{lemma} \label{lem:stable_family_from_e_tame}
    Let $\cZ\in\Irr^\tau(A)$ with $\dimhom^\tau_A(\cZ,\cZ) = 0$ and $c_A(\cZ) \geq 1$.
    Set $\theta := \g(\cZ)^\vee \in \K_0(A)^*$.
    Then there exists $\bd\in\K_0(A)^+$ such that
    $\dim {\Mod(A,\bd,\theta)^\st} \geq 1$.
\end{lemma}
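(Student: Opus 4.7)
The plan is to construct a one-parameter family of pairwise non-isomorphic $\theta$-stable $A$-modules as Jordan-Hölder composition factors (inside the Serre subcategory $\cW(V,0) \subseteq \cW(\theta)$) of $\theta$-semistable quotients of generic elements of $\cZ$.

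First I fix a generic $V \in \cZ$, so $\g(V) = \g(\cZ)$ and $\theta = \g(V)^\vee$. Auslander-Reiten's $\g$-vector formula \eqref{ar_g_vector} with $P=0$ gives $\theta(X) = \dimhom_A(V,X) - \dimhom_A(X,\tau_A V)$ for every $X \in \mod(A)$, and the hypothesis $\dimhom^\tau_A(\cZ,\cZ)=0$ yields $\Hom_A(W,\tau_A V)=0$ for generic $W \in \cZ$. For such $W$ the quotient $W' := W/\operatorname{tr}_V(W)$ by the $V$-trace satisfies $\Hom_A(V,W')=0$ by construction and $\Hom_A(W',\tau_A V)=0$ by left-exactness of $\Hom_A(-,\tau_A V)$. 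Thus $W' \in V^\perp \cap {^\perp \tau_A V} = \cW(V,0)$, and Lemma~\ref{lem:tau_perp_serre} places it inside $\cW(\theta)$, so $W'$ is $\theta$-semistable.

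The constructible assignment $W \mapsto [W']$ has infinite image. Indeed, if only finitely many isomorphism classes $[M_1], \dots, [M_s]$ of $W'$ arose, each generic $W$ would fit into a short exact sequence $0 \to \operatorname{tr}_V(W) \to W \to M_{j(W)} \to 0$ with $\operatorname{tr}_V(W)$ a quotient of the fixed module $V^{\theta(\bd)}$; since $V$ and the $M_j$ are fixed and the $\Ext^1_A(M_j,-)$-groups are finite-dimensional, only finitely many iso classes of $W$ in dimension $\bd$ would arise, contradicting $c_A(\cZ) \geq 1$. Pigeonholing on the finite set of dimension vectors bounded by $\bd$ produces a fixed $\bd''$ supporting a one-parameter family of iso classes of $\theta$-semistable modules $W' \in \cW(V,0)$ of dimension $\bd''$. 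Each such $W'$ then admits a Jordan-Hölder filtration in the length category $\cW(V,0) \subseteq \cW(\theta)$, whose simple factors are $\theta$-stable $A$-modules. Either some $\bd' \leq \bd''$ already supports a one-parameter family of these composition factors up to isomorphism, giving $\dim \Mod(A,\bd',\theta)^\st \geq 1$ by constructibility, or all the $W'$'s lie in a Serre subcategory $\cW'' \subseteq \cW(V,0)$ generated by finitely many $\theta$-stables $\{S_1,\dots,S_k\}$; then (by a Jasso-type reduction) $\cW'' \simeq \mod(B)$ for a finite-dimensional algebra $B$ with strictly fewer simples than $A$, and an induction on the number of simples using that $E$-tameness is inherited by $B$ produces the desired family.

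The main obstacle is the final descent: since our generic $V$ is not $\tau$-rigid (by $\tau$-reducedness, $\dim \Hom_A(V,\tau_A V) = c_A(\cZ) \geq 1$), Jasso's reduction does not apply directly to $\cW(V,0)$; one must instead identify a $\tau$-rigid pair $(V_0,P_0)$ adjacent to $\bmgamma$ in the $\g$-vector fan whose reduction encompasses the Serre subcategory $\cW''$, and verify that both the equivalence to $\mod(B)$ and the $E$-tameness of $B$ transfer properly along this reduction.
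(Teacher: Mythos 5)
Your construction of the semistable quotient $W' = W/\operatorname{tr}_V(W) \in \cW(V,0)\subseteq\cW(\theta)$ is fine (and one can even check $W'\neq 0$: generically both $\Hom_A(W,\tau_A(V))$ and $\Hom_A(V,\tau_A(W))$ vanish, so $\tau_A(W)\in V^\perp$, hence $\Hom_A(W',\tau_A(W))\cong\Hom_A(W,\tau_A(W))\neq 0$ because $c_A(\cZ)\geq 1$). The first genuine gap is the claim that $W\mapsto[W']$ has infinite image. Your counting argument needs $\operatorname{tr}_V(W)$ to range over only finitely many isomorphism classes as $W$ varies, and ``being a quotient of the fixed module $V^{n}$'' does not give this: a fixed module can have infinitely many pairwise non-isomorphic quotients of a fixed dimension vector (for the Kronecker algebra the indecomposable projective $P_2$ surjects onto every regular module of dimension vector $(1,1)$). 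So the ``finitely many extensions of finitely many $M_j$ by finitely many traces'' count does not close, and you cannot yet exclude that all generic $W$ have isomorphic $W'$, with the entire one-parameter variation absorbed into the trace. The second gap is the one you flag yourself: even granting infinitely many $W'$, your fallback branch (all composition factors among finitely many stables $S_1,\dots,S_k$, then a Jasso-type reduction and an induction on the number of simples) is not available, precisely because $V$ is not $\tau$-rigid, so $\cW(V,0)$ is not known to be a module category over an algebra with fewer simples; moreover $E$-tameness of $A$ is not a hypothesis of the lemma, so ``$B$ inherits $E$-tameness'' cannot be invoked here.

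The paper's proof avoids both problems by distributing the roles of $V$ and $W$ over an infinite family: using openness of the locus where $\Hom_A(W,\tau_A(V))=0$ on $\cZ\times\cZ$, it inductively extracts $V_0,V_1,V_2,\dots\in\cZ$ with $\Hom_A(V_i,\tau_A(V_j))=0$ for all $i\neq j$ while $\Hom_A(V_i,\tau_A(V_i))\neq 0$. For each $i$ it then performs exactly your iterated trace-quotient construction, but relative to all $V_j$ with $j\neq i$, producing a $\theta$-stable quotient $X_i$ of $V_i$ lying in $\bigcap_{j\neq i}\cW(V_j)$ and still satisfying $\Hom_A(-,\tau_A(V_i))\neq 0$ along the way (so $X_i\neq 0$); the Serre property of Lemma \ref{lem:tau_perp_serre} keeps the intermediate quotients inside the previously reached $\cW(V_j)$'s. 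Pairwise non-isomorphism is then immediate from $\Hom_A(X_i,X_j)=0$, since $X_i\in\fac(V_i)\subseteq{}^\perp(V_i^\perp)$ while $X_j\in\cW(V_i)\subseteq V_i^\perp$ --- no counting of extensions and no induction on simples is needed. To salvage your single-$V$ approach you would need a genuinely different argument that the stable quotients actually vary; as written, the two steps above do not go through.
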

\begin{proof}
    Let $\cU'\subseteq \cZ \times \cZ$ be the non-empty open subset such that
    \begin{align*}
        0 = \Hom_A(W,\tau_A(V)),
    \end{align*}
    and $\g(V) = \g(W) = \g(\cZ)$
    for all $(V,W)\in\cU'$.
    Let $p_1,p_2\colon \cZ \times \cZ \to \cZ$ denote the canonical projections
    onto the first respectively second factor.
    Since projections are open, 
    $p_1(\cU')$ and $p_2(\cU')$ are non-empty open subsets of $\cZ$.
    But $\cZ$ is irreducible, 
    hence $\cU := p_1(\cU') \cap p_2(\cU')$ is non-empty and still open in $\cZ$.
    Let $V_0\in\cU$, then there is an open subset $\cU_0\subset \cZ$ such that
    $\Hom_A(V_0,\tau_A(W)) = \Hom_A(W,\tau_A(V_0)) = 0$ for all $W\in\cU_0$
    (in particular $W\not\cong V_0$ because $\dimhom^\tau_A(\cZ)=c_A(\cZ)\neq 0$).
    Let $V_1 \in \cU\cap \cU_0$,
    then there exists an open subset $\cU_1 \subseteq \cZ$ with 
    $\Hom_A(V_1,\tau_A(W)) = \Hom_A(W,\tau_A(V_1)) = 0$ for all $W\in\cU_1$.
    Let $V_2\in\cU \cap \cU_0\cap \cU_1$.
    Inductively, construct an infinite family $\cV := \{V_i \mid i\geq 0\}\subset \cZ$
    with $\Hom_A(V_i,\tau_A(V_j)) = \Hom_A(V_j,\tau_A(V_i)) = 0$ for all $i,j\geq 0$ with $i\neq j$.

    \bigskip

    Next, we want to define for each $i\geq 0$ 
    a $\theta$-stable factor $V_i\twoheadrightarrow X_i$
    with $X_i \in \cW(V_j)$ for all $j\neq i$.
    We then get $\Hom_A(X_i,X_j) = 0$ for all $i,j\geq 0$ with $i\neq j$
    because $X_i \in\fac(V_i) \subseteq {^\perp(V_i^\perp)} \subseteq {^\perp\cW(V_i)}$
    and $X_j\in\cW(V_i)$
    by construction.
    Note that the dimension of the $X_i$ for $i\geq 0$
    is bounded by the dimension of elements in $\cZ$.
    Thus, there is a dimension vector $\bd\in \K_0(A)^+$
    such that $X_i\in\Rep(A,\bd,\theta)^\st$ for infinitely many $i\geq 0$.
    Therefore, $\norm{\Mod(A,\bd,\theta)^\st} = \infty$,
    but $\Mod(A,\bd,\theta)^\st$ is a quasi-projective variety 
    hence its dimension must be at least $1$.

    \bigskip

    It remains to construct such stable factors:
    Fix $i\geq 0$ and take any $j\geq 0$ with $j\neq i$.
    Consider the canonical short exact sequence
    \begin{align*}
        0 \to U_j \to V_i \to W_j \to 0
    \end{align*}
    with $U_j \in {^\perp(V_j^\perp)}$ and $W_j\in V_j^\perp$.
    Since by choice, $V_i\in {^\perp(\tau_A(V_j))}$
    we do find $W_j\in\cW(V_j)$.
    Even more,
    we have $\Hom_A(W_j,\tau_A(V_i)) \neq 0$
    because $\Hom_A(V_i,\tau_A(V_i)) \neq 0$
    and $\Hom_A(U_j,\tau_A(V_i)) = 0$
    for $\tau_A(V_i) \in V_j^{\perp}$
    by choice.
    In particular, $W_j \neq 0$.

    \bigskip

    Let $k\geq 0$ with $k\neq i,j$.
    Consider the canonical short exact sequence
    \begin{align*}
        0 \to U_k \to W_j \to W_k \to 0
    \end{align*}
    with $U_k \in {^\perp(V_k^\perp)}$ and $W_k\in V_k^\perp$.
    As before,
    we find $W_k \in \cW(V_k)$.
    Furthermore,
    $W_k\in\fac(\cW(V_j))$
    but $\cW(V_j)$ is a Serre subcategory of $\cW_\theta$
    by Lemma \ref{lem:tau_perp_serre}
    and $W_k \in \cW(V_k) \subseteq \cW_\theta$
    hence we still have $W_k \in \cW(V_j)$.
    Also, 
    $\Hom_A(W_k,\tau_A(V_i)) \neq 0$
    because $\Hom_A(U_k,\tau_A(V_i)) = 0$
    and we know from before that $\Hom_A(W_j,\tau_A(V_i)) \neq 0$.

    \bigskip

    We can proceed like this for all indices different from $i$,
    to obtain a non-zero factor module $V_i\twoheadrightarrow X'_i$ 
    with $X'_i\in\cW(V_j)$ for all $j\neq i$.
    Finally take $X_i$ to be a $\theta$-stable factor of $X'_i$.
    Again, $X_i\in\cW(V_j)$ for all $j\neq i$
    because $\cW(V_j)$ is a Serre subcategory of $\cW_\theta$
    for all $j\geq 0$ by Lemma \ref{lem:tau_perp_serre}.
\end{proof}

\subsection*{Acknowledgments}
This work is part of my Ph.D. thesis.
I am deeply indebted to Syddansk Universitet (SDU)
as well as to my co-supervisors Prof. Christof Geiß and Dr. Fabian Haiden
for making this project possible.
Special thanks go to Prof. Christof Geiß 
for his invaluable continuous support
and for sharing his recent work with me.
I am also grateful to Dr. Fabian Haiden for his helpful advice 
and many stimulating ideas.
Last but not least, 
I would like to thank Dr. Hipolito Treffinger
for interesting discussions
and his constant encouragement
during my research visits at the Université Paris Cité.

This paper is partly a result of the 
ERC-SyG project, Recursive and Exact New Quantum Theory (ReNewQuantum) 
which received funding from the European Research Council (ERC) 
under the European Union's Horizon 2020 research and innovation programme 
under grant agreement No 810573,
held by Prof. Jørgen Ellegaard Andersen
to whom I would like to express my sincere gratitude 
for giving me the opportunity to pursue my Ph.D. studies at SDU.

\bibliographystyle{amsalpha}
\bibliography{bibliography.bib}

\end{document}